\newcommand\footnoteref[1]{\protected@xdef\@thefnmark{\ref{#1}}\@footnotemark}
\definecolor{grey}{rgb}{0.95,0.95,0.95}
\definecolor{green}{rgb}{0.2,0.6,0.4}
\newcommand{\Psf}{\mathsf{P}}
\newcommand{\Qsf}{\mathsf{Q}}
\newcommand{\imp}{\rightarrow}
\newcommand{\biimp}{\leftrightarrow}
\newcommand{\Nb}{\mathbb{N}}
\newcommand{\dbf}{\mathbf{d}}
\newcommand{\ebf}{\mathbf{e}}
\newcommand{\Ccal}{\mathcal{C}}
\newcommand{\Mcal}{\mathcal{M}}
\renewcommand{\setminus}{\smallsetminus}
\newcommand{\card}[1]{\left| #1 \right|}
\newcommand{\tuple}[1]{\left\langle #1 \right\rangle}
\newcommand{\s}[1]{\ensuremath{\sf{#1}}}
\newcommand{\dnrs}[1]{#1\mbox{-}\s{DNC}}
\newcommand{\rwkls}[1]{#1\mbox{-}\s{RWKL}}
\newcommand{\gens}[1]{#1\mbox{-}\s{GEN}}
\newcommand{\seqs}[1]{\s{Seq}^{*}(#1)}
\DeclareMathOperator{\rca}{\s{RCA}_0}
\DeclareMathOperator{\aca}{\s{ACA}_0}
\DeclareMathOperator{\wkl}{\s{WKL}_0}
\DeclareMathOperator{\dnr}{\s{DNC}}
\DeclareMathOperator{\bst}{\s{B}\Sigma^0_2}
\DeclareMathOperator{\rwkl}{\s{RWKL}}
\DeclareMathOperator{\rt}{\s{RT}}
\DeclareMathOperator{\srt}{\s{SRT}}
\DeclareMathOperator{\ads}{\s{ADS}}
\DeclareMathOperator{\rcolor}{\s{RCOLOR}}
\DeclareMathOperator{\coh}{\s{COH}}
\DeclareMathOperator{\fip}{\s{FIP}}
\DeclareMathOperator{\ts}{\s{TS}}
\DeclareMathOperator{\sts}{\s{STS}}
\DeclareMathOperator{\ipt}{\s{IPT}}
\DeclareMathOperator{\fs}{\s{FS}}
\DeclareMathOperator{\emo}{\s{EM}}
\DeclareMathOperator{\semo}{\s{SEM}}
\definecolor{lightblue}{HTML}{e6e6e6}
\definecolor{lightred}{HTML}{eca6a6}
\definecolor{lightgreen}{RGB}{164,244,140}
\newtheoremstyle{custom}
  {10pt}
  {10pt}
  {\normalfont}
  {}
  {\bfseries}
  {}
  { }
  {}
\theoremstyle{custom}
\newcommand{\update}[1]{\textcolor{green}{UPDATE: #1}}
\newtheorem{theorem}{Theorem}[section]
\newtheorem{lemma}[theorem]{Lemma}
\newtheorem{definition}[theorem]{Definition}
\newtheorem{question}[theorem]{Question}
\newtheorem{corollary}[theorem]{Corollary}
\begin{document}

\title[Open questions about Ramsey-type statements in reverse mathematics]
	{Open questions about Ramsey-type statements\\ in reverse mathematics}

\author{
  Ludovic Patey
}

\begin{abstract}
Ramsey's theorem states that for any coloring of the $n$-element subsets of $\Nb$ with
finitely many colors, there is an infinite set~$H$ such that all $n$-element subsets of
$H$ have the same color. The strength of consequences of Ramsey's theorem
has been extensively studied in reverse mathematics 
and under various reducibilities, namely, computable reducibility and uniform reducibility.
Our understanding of the combinatorics of Ramsey's theorem and its consequences
has been greatly improved over the past decades.
In this paper, we state some questions which naturally arose during 
this study. The inability to answer those questions reveals
some gaps in our understanding of the combinatorics of Ramsey's theorem.
\end{abstract}

\maketitle

\begin{center}
\textcolor{green}{Last update: December 2024}
\end{center}

\section{Introduction}

Ramsey's theory is a branch of mathematics studying the conditions
under which some structure appears among a sufficiently large collection of objects.
Perhaps the most well-known example is Ramsey's theorem, which states that for any coloring of the $n$-element subsets of $\Nb$ with
finitely many colors, there is an infinite set~$H$ such that all $n$-element subsets of
$H$ have the same color. Consequences of Ramsey's theorem have been extensively studied in reverse mathematics 
and under various reducibilities, among which, computable reducibility.

Reverse mathematics is a vast mathematical program
whose goal is to classify ordinary theorems in terms of their provability strength.
It uses the framework of subsystems of second-order arithmetic,
which is sufficiently rich to express many theorems in a natural way.
The base system, $\rca$ (standing for Recursive Comprehension Axiom),
contains basic first-order Peano arithmetic together with the~$\Delta^0_1$
comprehension scheme and the~$\Sigma^0_1$ induction scheme.
The early study of reverse mathematics revealed that most ``ordinary'', i.e., non set-theoretic, theorems
are equivalent to five main subsystems, known as the ``Big Five''~\cite{Simpson2009Subsystems}.
However, Ramsey's theory provides a large class of theorems escaping this observation,
making it an interesting research subject in reverse mathematics~\cite{bienvenu2017logical,Cholak2001strength,Hirschfeldt2008strength,Hirschfeldt2007Combinatorial,patey2015combinatorial,Seetapun1995strength}.
The book of Hirschfeldt~\cite{hirschfeldt2015slicing} is an excellent introduction to reverse mathematics and in particular
the reverse mathematics of Ramsey's theorem.

Many theorems are $\Pi^1_2$ statements~$\Psf$ of the form $(\forall X)[\Phi(X) \imp (\exists Y)\Psi(X,Y)]$.
A set~$X$ such that~$\Phi(X)$ holds is called a \emph{$\Psf$-instance} and a set $Y$ such that~$\Psi(X,Y)$
holds is a \emph{solution to $X$}. A theorem~$\Psf$ is computably reducible to~$\Qsf$ (written~$\Psf \leq_c \Qsf$)
if for every $\Psf$-instance~$X$, there is an~$X$-computable $\Qsf$-instance~$Y$ such that
every solution to~$Y$ computes relative to~$X$ a solution to~$X$.
Computable reducibility provides a more fine-grained analysis
of theorems than reverse mathematics, in the sense that it is sensitive to the number of applications
of the theorem~$\Psf$ in a proof that $\Psf$ implies~$\Qsf$ over~$\rca$~\cite{dorais2016uniform,dzhafarov2016strong,hirschfeldt2016notions,Patey2016weakness}.
For example, Ramsey's theorem for~$(k+1)$-colorings of the~$n$-element subsets of~$\Nb$
is not computably equivalent to Ramsey's theorem for~$k$-colorings, whereas those statements
are equivalent over~$\rca$~\cite{Patey2016weakness}.

\subsection{Ramsey's theorem}

The strength of Ramsey-type statements is notoriously hard to tackle
in the setting of reverse mathematics. The separation of Ramsey's theorem for pairs ($\rt^2_2$)
from the arithmetic comprehension axiom ($\aca$) was a long-standing open
problem, until Seetapun solved it~\cite{Seetapun1995strength} using the notion of cone avoidance.

\begin{definition}[Ramsey's theorem]
A subset~$H$ of~$\Nb$ is~\emph{homogeneous} for a coloring~$f : [\Nb]^n \to k$ (or \emph{$f$-homogeneous}) 
if all $n$-tuples over~$H$ are given the same color by~$f$. 
$\rt^n_k$ is the statement ``Every coloring $f : [\Nb]^n \to k$ has an infinite $f$-homogeneous set''.
\end{definition}

Jockusch~\cite{Jockusch1972Ramseys} studied the computational strength of Ramsey's theorem,
and Simpson~\cite[Theorem III.7.6]{Simpson2009Subsystems} built upon his work to prove that whenever~$n \geq 3$ and~$k \geq 2$,
$\rca \vdash \rt^n_k \biimp \aca$. Ramsey's theorem for pairs is probably the most famous 
example of a statement escaping the Big Five. Seetapun~\cite{Seetapun1995strength} proved that~$\rt^2_2$ 
is strictly weaker than~$\aca$ over~$\rca$. Because of the complexity of the related separations, 
$\rt^2_2$ received particular attention from the reverse mathematics community~\cite{Cholak2001strength,
Seetapun1995strength,Jockusch1972Ramseys}. 
Cholak, Jockusch and Slaman~\cite{Cholak2001strength} and Liu~\cite{liu2012rt2_2} proved that 
$\rt^2_2$ is incomparable with weak K\"onig's lemma.
Dorais, Dzhafarov, Hirst, Mileti and Shafer~\cite{dorais2016uniform},
Dzhafarov~\cite{Dzhafarov2014Cohesive}, Hirschfeldt and Jockusch~\cite{hirschfeldt2016notions},
Rakotoniaina~\cite{Rakotoniaina2015Computational}
and the author~\cite{Patey2016weakness} studied the computational strength of Ramsey's theorem
according to the number of colors, when fixing the number of applications of the principle.

In this paper, we state some remaining open questions which naturally arose during
the study of Ramsey's theorem. Many of them are already stated in various papers.
These questions cover a much thinner branch of reverse mathematics than
the paper of Montalban~\cite{Montalban2011Open} and are of course
influenced by the own interest of the author. We put our focus on a few central open questions,
motivate them and try to give some insights about the reason for their complexity. 
We also detail some promising approaches and ask
more technical questions, driven by the resolution of the former ones.
The questions are computability-theoretic oriented, and therefore mainly
involve the relations between statements over $\omega$-models,
that is, models where the first-order part is composed of the standard integers.

\section{Cohesiveness and partitions}\label{sect:partitions-reducibility}

In order to better understand the combinatorics of Ramsey's theorem for pairs,
Cholak et al.~\cite{Cholak2001strength} decomposed it into a cohesive and a stable version.
This approach has been fruitfully reused in the analysis of various consequences
of Ramsey's theorem~\cite{Hirschfeldt2007Combinatorial}.

\begin{definition}[Stable Ramsey's theorem]
A function~$f : [\Nb]^2 \to k$ is \emph{stable} if for every~$x \in \Nb$,
$\lim_s f(x,s)$ exists. $\srt^2_k$ is the restriction of~$\rt^2_k$ to stable colorings.
$\mathsf{D}^2_k$ is the statement ``Every $\Delta^0_2$ $k$-partition has an infinite
subset in one of its parts''.
\end{definition}

By Shoenfield's limit lemma~\cite{Shoenfield1959degrees},
a stable coloring $f : [\Nb]^2 \to 2$ can be seen as the $\Delta^0_2$
approximation of the $\Delta^0_2$ set~$A = \lim_s f(\cdot, s)$.
Cholak et al.~\cite{Cholak2001strength} proved that~$\mathsf{D}^2_k$ and~$\srt^2_k$ are computably equivalent
and that the proof is formalizable over~$\rca+\bst$, where~$\bst$ is the $\Sigma^0_2$-bounding statement.
Later, Chong et al.~\cite{Chong2010role} proved that~$\mathsf{D}^2_2$ implies~$\bst$ over~$\rca$,
showing therefore that~$\rca \vdash \mathsf{D}^2_k \biimp \srt^2_k$ for every~$k \geq 2$.
The statement~$\mathsf{D}^2_k$ can be seen as a variant of~$\rt^1_k$, where the instances are $\Delta^0_2$.
It happens that many statements of the form ``Every computable instance of~$\srt^2_2$ has a solution satisfying
some properties'' are proven by showing the following stronger statement ``Every instance of~$\rt^1_2$ (without any effectiveness
restriction) has a solution satisfying some properties''. This is for example the case for closed sets avoidances~\cite{Dzhafarov2009Ramseys,Liu2015Cone,liu2012rt2_2}, 
and various preservation notions~\cite{Patey2015Iterative,Patey2016weakness,wang2016definability}.
This observation shows that the weakness of~$\rt^1_2$ has a combinatorial nature,
whereas $\srt^2_2$ is only effectively weak.

\begin{definition}[Cohesiveness]
An infinite set $C$ is $\vec{R}$-cohesive for a sequence of sets $R_0, R_1, \dots$
if for each $i \in \Nb$, $C \subseteq^{*} R_i$ or $C \subseteq^{*} \overline{R_i}$.
A set $C$ is \emph{p-cohesive} if it is $\vec{R}$-cohesive where
$\vec{R}$ is an enumeration of all primitive recursive sets.
$\coh$ is the statement ``Every uniform sequence of sets $\vec{R}$
has an $\vec{R}$-cohesive set.''
\end{definition}

Jockusch \& Stephan~\cite{Jockusch1993cohesive} studied the degrees of unsolvability of cohesiveness
and proved that~$\coh$ admits a universal instance, i.e., an instance whose solutions compute solutions
to every other instance. This instance consists of the primitive recursive sets.
They characterized the degrees of p-cohesive sets as those whose
jump is PA relative to~$\emptyset'$. Later, the author~\cite{Patey2016weakness} refined this correspondence
by showing that for every computable sequence of sets~$R_0, R_1, \dots$,
there is a $\Pi^{0, \emptyset'}_1$ class $\Ccal$ of reals such that the degrees bounding an~$\vec{R}$-cohesive set
are exactly those whose jump bounds a member of~$\Ccal$. Moreover, for every $\Pi^{0,\emptyset'}_1$ class~$\Ccal$,
we can construct a computable sequence of sets~$\vec{R}$ satisfying the previous property.
In particular, if some instance of~$\coh$ has no computable solution, then it has no low solution.
This shows that $\coh$ is a statement about the Turing jump.

Cholak, Jockusch and Slaman~\cite{Cholak2001strength} 
claimed that~$\rt^2_2$ is equivalent to~$\srt^2_2+\coh$ over~$\rca$
with an erroneous proof. Mileti~\cite{Mileti2004Partition} and Jockusch \& Lempp [unpublished]
independently fixed the proof. 
Cholak et al.~\cite{Cholak2001strength} proved that~$\coh$ does not imply~$\srt^2_2$
over~$\rca$. The question of the other direction has been a long-standing open problem.
Recently, Chong et al.~\cite{Chong2014metamathematics} proved that~$\srt^2_2$ is strictly weaker than~$\rt^2_2$
over~$\rca+\bst$. However they used non-standard models to separate the statements and the question
whether~$\srt^2_2$ and~$\rt^2_2$ coincide over~$\omega$-models remains open.
See the survey of Chong, Li and Yang~\cite{Chong2014Nonstandard} for the approach of non-standard analysis 
applied to reverse mathematics.

\begin{question}\label{quest:srt22-rt22-omega}
Does~$\srt^2_2$ imply~$\rt^2_2$ (or equivalently $\coh$) over $\omega$-models?
\end{question}

\update{This question was answered negatively by Monin and Patey in \cite{monin2021srt2_2}}
\smallskip

Jockusch~\cite{Jockusch1972Ramseys} constructed a computable instance of~$\rt^2_2$
with no $\Delta^0_2$ solution. Cholak et al.~\cite{Cholak2001strength} suggested building
an $\omega$-model of~$\srt^2_2$ composed only of low sets. However, Downey et al.~\cite{downey20010_2}
constructed a $\Delta^0_2$ set with no low subset of either it or its complement.
As often, the failure of an approach should not be seen as a dead-end, but as a starting point.
The construction of Downey et al.\ revealed that~$\srt^2_2$ carries some additional computational
power, whose nature is currently unknown. Indeed, all the natural computability-theoretic consequences
of~$\srt^2_2$ known hitherto admit low solutions. Answering the following question 
would be a significant step towards understanding the strength of~$\srt^2_2$.

\begin{question}
Is there a natural computable $\srt^2_2$-instance with no low solution?
\end{question}

Here, by ``natural'', we mean an instance which carries more informational content than having no low solution. 
Interestingly, Chong et al.~\cite{Chong2014metamathematics} constructed a non-standard
model of~$\rca+\bst\allowbreak+\srt^2_2$ containing only low sets. This shows that the argument
of Downey et al.~\cite{downey20010_2} requires more than~$\Sigma^0_2$-bounding,
and suggests that such an instance has to use an elaborate construction.

Hirschfeldt et al.~\cite{Hirschfeldt2008strength} proposed a very promising
approach using an extension of Arslanov's completeness criterion~\cite{Arslanov1981Some,Jockusch1989Recursively}.
A set~$X$ is~\emph{1-CEA over~$Y$} if c.e.\ in and above~$Y$.
A set~$X$ is~\emph{$(n+1)$-CEA over~$Y$} if it is 1-CEA over a set~$Z$ which is $n$-CEA over~$Y$.
In particular the sets which are 1-CEA over~$\emptyset$ are the c.e. sets.
By~\cite{Jockusch1989Recursively}, every set $n$-CEA over~$\emptyset$ computing a set of PA degree is complete.
Hirschfeldt et al.\ asked the following question. 

\begin{question}\label{quest:delta2-low2-delta2}
Does every $\Delta^0_2$ set admit an infinite subset of either it or its complement
which is both low${}_2$ and~$\Delta^0_2$? 
\end{question}

A positive answer to Question~\ref{quest:delta2-low2-delta2} would enable one to build an $\omega$-model $\Mcal$ of~$\srt^2_2$
such that for every set~$X \in \Mcal$, $X$ is low${}_2$ and $X'$ is $n$-CEA over~$\emptyset'$. By Jockusch
\& Stephan~\cite{Jockusch1993cohesive}, if some~$X \in \Mcal$ computes some p-cohesive set, then~$X'$ is of PA degree relative to~$\emptyset'$. By a relativization of Jockusch et al.~\cite{Jockusch1989Recursively}, $X'$ would compute~$\emptyset''$, so would be high,
which is impossible since $X$ is low${}_2$. Therefore, $\Mcal$ would be an $\omega$-model of~$\srt^2_2$ which is not a model of~$\coh$,
answering Question~\ref{quest:srt22-rt22-omega}. Note that the argument also works if we replace~low${}_2$ by low${}_n$, where~$n$ may even
depend on the instance. Hirschfeldt et al.~\cite{Hirschfeldt2008strength} 
proved that every $\Delta^0_2$ set has an infinite incomplete $\Delta^0_2$ subset of either it or its complement. This is the best upper bound currently known. They asked the following question which is the strong negation of Question~\ref{quest:delta2-low2-delta2}.

\begin{question}\label{quest:delta2-jump-pa}
Is there a $\Delta^0_2$ set such that every infinite subset of either it or its complement 
has a jump of PA degree relative to~$\emptyset'$? 
\end{question}

By Arslanov's completeness criterion, a positive answer to Question~\ref{quest:delta2-jump-pa}
would also provide one to the following question.

\begin{question}\label{quest:delta2-high}
Is there a $\Delta^0_2$ set such that every $\Delta^0_2$ infinite subset of either it or its complement 
is high? 
\end{question}

Cohesiveness can be seen as a sequential version of~$\rt^1_2$ with finite errors ($\seqs{\rt^1_2}$).
More formally, given some theorem~$\Psf$, $\seqs{\Psf}$ is the statement ``For every uniform
sequence of $\Psf$-instances~$X_0, X_1, \dots$, there is a set~$Y$ which is, up to finite changes,
a solution to each of the~$X$'s.'' One intuition about the guess that $\srt^2_2$ does not imply~$\coh$
could be that by the equivalence between~$\srt^2_2$ and~$\mathsf{D}^2_2$,
$\srt^2_2$ is nothing but a non-effective instance of~$\rt^1_2$, and that one cannot encode in a single instance
of~$\rt^1_2$ countably many $\rt^1_2$-instances. Note that this argument is of combinatorial
nature, as it does not make any effectiveness assumption on the instance of~$\rt^1_2$.
We express reservations concerning the validity of this argument, as witnessed by what follows.

\begin{definition}[Thin set theorem]
Given a coloring~$f : [\Nb]^n \to k$ (resp.\ $f : [\Nb]^n \to \Nb$), an infinite set~$H$
is \emph{thin} for~$f$ if~$|f([H]^n)| \leq k-1$ (resp.\ $f([H]^n) \neq \Nb$), that is,
$f$ avoids at least one color over~$H$.
For every~$n \geq 1$ and~$k \geq 2$, $\ts^n_k$ is the statement
``Every coloring $f : [\Nb]^n \to k$ has a thin set''
and $\ts^n$ is the statement ``Every coloring $f : [\Nb]^n \to \Nb$ has a thin set''.
\end{definition}

The thin set theorem is a natural weakening of Ramsey's theorem. 
Its reverse mathematical analysis started with Friedman~\cite{FriedmanFom53free,Friedman2013Boolean}.
It has been studied by Cholak et al.~\cite{Cholak2001Free},
Wang~\cite{Wang2014Some} and the author~\cite{patey2015combinatorial,Patey2015Degrees,Patey2016weakness} among others.
According to the definition, $\seqs{\ts^1_k}$ is the statement ``For every uniform sequence of functions
$f_0, f_1, \dots : \Nb \to k$, there is an infinite set~$H$ which is, up to finite changes, thin
for all the~$f$'s.''

\begin{definition}[DNC functions]
A function~$f : \Nb \to \Nb$ is \emph{diagonally non-computable relative to~$X$}
if for every~$e$, $f(e) \neq \Phi^X_e(e)$.
For every~$n \geq 1$, $\dnrs{n}$ is the statement~``For every set~$X$,
there is a function DNC relative to~$X^{(n-1)}$. We write $\dnr$ for~$\dnrs{1}$.
\end{definition}

DNC functions are central notions in algorithmic randomness, as their degrees
coincide with the degrees of infinite subsets of Martin-L\"of randoms
(see Kjos-Hanssen~\cite{kjoshanssen2009infinite} and Greenberg \& Miller~\cite{Greenberg2009Lowness}).
Moreover, Jockusch \& Soare~\cite{Jockusch1972Degrees} and Solovay [unpublished] proved that 
the degrees of $\{0,1\}$-valued DNC functions are exactly the PA degrees.
DNC functions naturally appear in reverse mathematics, the most suprising example
being the equivalence between the rainbow Ramsey theorem for pairs 
and $\dnrs{2}$ proven by Miller~\cite{MillerAssorted}. The rainbow Ramsey theorem
asserts that every coloring of~$[\Nb]^n$ in which each color appears at most $k$ times
admits an infinite set in which each color appears at most once.
It has been studied by Csima \& Mileti~\cite{Csima2009strength}, 
Wang~\cite{WangSome,Wang2014Cohesive} and the author~\cite{Patey2015Somewhere},
among others.

Theorem~3.5 in Jockusch \& Stephan~\cite{Jockusch1993cohesive} can easily be adapted to prove that
the degrees of solutions to primitive recursive instances of~$\seqs{\ts^1}$ (resp.\ $\seqs{\ts^1_k}$)
are exactly those whose jump is of DNC (resp.\ $k$-valued DNC) degree relative to~$\emptyset'$.
By a relativization of Friedberg~\cite{jockusch1989degrees}, the degrees whose jump is PA and those whose
jump bounds a $k$-valued DNC function coincide. Therefore~$\coh$ and~$\seqs{\ts^1_k}$ are computably equivalent.
However, $\seqs{\ts^1}$ is a strictly weaker statement, as for any computable instance of~$\seqs{\ts^1}$,
the measure of oracles computing a solution to it is positive.

Recall our intuition that a single instance of~$\rt^1_2$ cannot encode the information
of countably many instances of~$\rt^1_2$. This intuition is false when considering~$\ts^1$.
Indeed, there is a (non-$\Delta^0_2$) instance of~$\ts^1$ (and \emph{a fortiori} one of~$\rt^1_2$) whose solutions all bound a
function DNC relative to~$\emptyset'$, and therefore computes a solution to any computable instance of~$\seqs{\ts^1}$.
Therefore, before asking whether $\coh$ is a consequence of $\emptyset'$-effective instances of~$\rt^1_2$,
it seems natural to ask whether $\coh$ is a consequence of any coloring over singletons in a combinatorial sense,
that is, with no effectiveness restriction at all.

\begin{question}\label{quest:rt12-combi-jump-pa}
Is there any $\rt^1_2$-instance whose solutions have a jump of PA degree relative to~$\emptyset'$?
\end{question}

Note that a negative answer to Question~\ref{quest:rt12-combi-jump-pa} would have practical reverse mathematical
consequences. There is an ongoing search for natural statements strictly between $\rt^2_2$ and~$\srt^2_2$ over~$\rca$~\cite{Dzhafarov2009polarized}.
Dzhafarov \& Hirst~\cite{Dzhafarov2009polarized} introduced the increasing polarized Ramsey's theorem for pairs ($\ipt^2_2$), 
and proved it to be between~$\rt^2_2$ and~$\srt^2_2$ over~$\rca$. The author~\cite{Patey2015Somewhere} proved that
$\ipt^2_2$ implies the existence of a function DNC relative to~$\emptyset'$, therefore showing that~$\srt^2_2$
does not imply~$\ipt^2_2$ over~$\rca$. The statement $\ipt^2_2$ is equivalent
to $\rwkls{2}$, a relativized variant of the Ramsey-type weak K\"onig's lemma, over~$\rca$,
and therefore is a combinatorial consequence of~$\rt^1_2$. 
An iterable negative answer to Question~\ref{quest:rt12-combi-jump-pa} would prove that~$\ipt^2_2$
does not imply~$\rt^2_2$, hence is strictly between $\rt^2_2$ and~$\srt^2_2$ over~$\rca$.

We have seen that $\seqs{\ts^1}$ is a combinatorial consequence of~$\ts^1$.
This information helps us for tackling the following question.
Indeed, if proven false, it must be answered by effective means and not  by combinatorial ones.

\begin{question}\label{quest:srt-seq-thin-set}
Does~$\srt^2_2$ imply~$\seqs{\ts^1}$ over~$\omega$-models?
\end{question}

\begin{question}\label{quest:rt-jump-DNC}
Is there a $\Delta^0_2$ set such that every infinite subset of either it or its complement 
has a jump of DNC degree relative to~$\emptyset'$? 
\end{question}

Although those questions are interesting in their own right, 
a positive answer to Question~\ref{quest:delta2-low2-delta2} would provide a negative answer to Question~\ref{quest:srt-seq-thin-set},
and a positive answer to Question~\ref{quest:rt-jump-DNC} would provide a positive answer to Question~\ref{quest:delta2-high}.
Indeed, the extended version of Arslanov's completeness criterion states in fact that
every set $n$-CEA over~$\emptyset$ computing a set of DNC degree is complete. In particular,
if Question~\ref{quest:delta2-low2-delta2} has a positive answer, then there is an $\omega$-model of $\srt^2_2$
which is not a model of the rainbow Ramsey theorem for pairs.

Let us finish this section by a discussion about why those problems are so hard to tackle.
There are different levels of answers, starting from the technical one which is more objective, but probably also less informative,
to the meta discussion which tries to give more insights, but can be more controversial.

From a purely technical point of view, all the forcing notions used so far to produce solutions
to Ramsey-type statements are variants of Mathias forcing. In particular, they restrict the future elements
to a ``reservoir''. Any sufficiently generic filters for those notions
of forcing yield cohesive sets. Therefore, one should not expect to obtain a diagonalization against 
instances of~$\coh$ by exhibiting a particular dense set of conditions. Indeed, one would derive a contradiction
by taking a set sufficiently generic to meet both those diagonalizing sets, and the dense sets producing a cohesive solution.
More generally, as long as we use a forcing notion where we restrict the future elements to a reservoir,
any diagonalization against~$\coh$ has to strongly rely on some effectiveness of the overall construction.
The first and second jump control of Cholak et al.~\cite{Cholak2001strength} form a case in point
of how to restrict the amount of genericity to obtain some stronger properties,
which are provably wrong when taking any sufficiently generic filter.

In a higher level, we have mentioned that $\coh$ is a statement
about the jump of Turing degrees. In other words, by Shoenfield's limit lemma~\cite{Shoenfield1959degrees}, 
$\coh$ is a statement about some limit behavior, and is therefore non-sensitive
to any local modification. However, the computability-theoretic properties
used so far to separate statements below~$\aca$ are mainly acting ``below the first jump'',
in the sense that the diagonalization occurs after a finite amount of time.
With $\coh$, there will be some need for a ``continuous diagonalization'', that is, 
a diagonalization which has to be maintained all along the construction.

\section{A Ramsey-type weak K\"onig's lemma}
K\"onig's lemma asserts that every infinite, finitely branching
tree has an infinite path.
Weak K\"onig's lemma ($\wkl$) is the restriction of K\"onig's lemma to infinite binary trees.
$\wkl$ plays a central role in reverse mathematics.
It is one of the Big Five and informally captures compactness arguments~\cite{Simpson2009Subsystems}.
Weak K\"onig's lemma is involved in many constructions of solutions to Ramsey-type statements, e.g.,
cone avoidance~\cite{Seetapun1995strength,Wang2014Some} or control of the jump~\cite{Cholak2001strength,patey2017controlling}.
The question of whether $\rt^2_2$ implies~$\wkl$ over~$\rca$
was open for decades, until Liu~\cite{liu2012rt2_2} solved it by proving that PA degrees
are not a combinatorial consequence of~$\rt^1_2$.

Recently, Flood~\cite{Flood2012Reverse} clarified the relation between Ramsey-type theorems
and $\wkl$, by introducing a Ramsey-type variant of weak K\"onig's lemma ($\rwkl$).
Informally, seeing a set as a 2-coloring of the integers, 
for every $\Pi^0_1$ class of 2-colorings, $\rwkl$ states
the existence of an infinite set homogeneous for one of them.
The exact statement of~$\rwkl$ has to be done with some care, as we do not want 
to state the existence of a member of the $\Pi^0_1$ class.

\begin{definition}[Ramsey-type weak K\"onig's lemma]
A set $H \subseteq \Nb$ is \emph{homogeneous} for a $\sigma \in 2^{<\Nb}$ if $(\exists c < 2)(\forall i \in H)(i < |\sigma| \imp \sigma(i)=c)$, and a set $H \subseteq \Nb$ is \emph{homogeneous} for an infinite tree $T \subseteq 2^{<\Nb}$ if the tree $\{\sigma \in T : \text{$H$ is homogeneous for $\sigma$}\}$ is infinite.  $\rwkl$ is the statement ``for every infinite subtree of $2^{<\Nb}$, there is an infinite homogeneous set.''
\end{definition}

The Ramsey-type weak K\"onig's lemma was introduced by Flood in~\cite{Flood2012Reverse}
under the name $\mathsf{RKL}$, and later renamed $\mathsf{RWKL}$ by Bienvenu, Patey and Shafer.
Flood~\cite{Flood2012Reverse} proved that~$\rwkl$ is a strict consequence of both~$\srt^2_2$ and~$\wkl$,
and that~$\rwkl$ implies~$\dnr$ over~$\rca$. Bienvenu et al.~\cite{bienvenu2017logical} 
and Flood \& Towsner~\cite{flood2016separating} independently separated~$\dnr$ from~$\rwkl$.
They furthermore proved that~$\dnr$ coincides over~$\rca$ to the restriction of~$\rwkl$
to trees of positive measure. 
Very little is currently known about~$\rwkl$.
Despite its complicated formulation, $\rwkl$ is a natural statement
which is worth being studied due to the special status of Ramsey-type theorems in reverse mathematics.

The statements analysed in reverse mathematics are collections of problems (instances)
coming with a class of solutions. Sometimes, it happens that one problem is maximally difficult.
In this case, the strength of the whole statement can be understood by studying this particular instance.

\begin{definition}[Universal instance]
A computable~$\Psf$-instance $X$ is~\emph{universal} if for every computable $\Psf$-instance~$Y$,
every solution to~$X$ computes a solution to~$Y$.
A degree~$\dbf$ is~\emph{$\Psf$-bounding relative to a degree~$\ebf$}
if every $\ebf$-computable instance has a $\dbf$-computable solution. The degree~$\dbf$
is \emph{$\Psf$-bounding} if it is $\Psf$-bounding relative to~$\mathbf{0}$.
\end{definition}

$\wkl$ is known to admit universal instances,
e.g., the tree whose paths are completions of Peano arithmetic,
whereas Mileti~\cite{Mileti2004Partition} proved that $\rt^2_2$ and~$\srt^2_2$ do not admit one.
The author~\cite{Patey2015Degrees} studied extensively which theorems
in reverse mathematics admit a universal instance,
and which do not. It happens that most consequences of $\rt^2_2$
in reverse mathematics do not admit a universal instance.
The most notable exceptions are the rainbow Ramsey theorem 
for pairs~\cite{Csima2009strength,Patey2015Somewhere,WangSome,Wang2014Cohesive},
the finite intersection property~\cite{Cholak2015Any,diamondstone2016finite,dzhafarov2013strength} and~$\dnr$.
It is natural to wonder, given the fact that~$\rwkl$ is a consequence of both~$\srt^2_2$
and of~$\wkl$, whether~$\rwkl$ admits a universal instance.

\begin{question}\label{quest:rwkl-universal-instances}
Does~$\rwkl$ admit a universal instance?
\end{question}

There is a close link between the $\Psf$-bounding degrees and the existence of a universal $\Psf$-instance.
Indeed, the degrees of the solutions to a universal $\Psf$-instance are $\Psf$-bounding.
Using the contraposition, one usually proves that a statement~$\Psf$ admits no universal instance
by showing that every computable $\Psf$-instance has a solution of degree belonging to a class~$\Ccal$,
and that for every degree~$\dbf \in \Ccal$, there is a computable $\Psf$-instance to which~$\dbf$
bounds no solution~\cite{Mileti2004Partition,Patey2015Degrees}.

Interestingly, Question~\ref{quest:rwkl-universal-instances} has some connections with the~$\srt^2_2$ vs $\coh$ question.
The only construction of solutions to instances of~$\rwkl$ which do not produce
sets of PA degree are variants of Mathias forcing which produce solutions
to~$\seqs{\rwkl}$. In both cases, the solutions are~$\rwkl$-bounding,
and in the latter case, they have a jump of PA degree relative to~$\emptyset'$.
By the previous discussion, one should not expect to prove that~$\rwkl$
admits no universal instance in the usual way, unless by finding a new forcing notion.

Some statements do not admit a universal instance because their class of instances is too
restrictive, but they have a natural strengthening which does admit one.
This is for example the case of the rainbow Ramsey theorem for pairs,
which by Miller~\cite{MillerAssorted} admit a universal instance whose solutions are of
DNC degree relative to~$\emptyset'$, but stable variants of the rainbow Ramsey theorem
do not admit one~\cite{Patey2015Degrees}. It is therefore natural to wonder whether there is some 
strengthening of~$\rwkl$ still below~$\rt^2_2$ which admits a universal instance. The Erd\H{o}s-Moser theorem,
defined in the next section, is a good candidate. The following question is a weakening of this interrogation.

\begin{question}
Is there some instance of~$\rt^1_2$ or some computable
instance of~$\rt^2_2$ whose solutions are $\rwkl$-bounding?
\end{question}

Weak K\"onig's lemma is equivalent to several theorems over~$\rca$~\cite{Simpson2009Subsystems}.
However, the Ramsey-type versions of those theorems do not always
give statements equivalent to~$\rwkl$ over~$\rca$. For instance, 
the existence of a separation of two computably inseparable c.e.\ sets
is equivalent to~$\wkl$ over~$\rca$, whereas it is easy to compute
an infinite subset of a separating set.
Among those statements, the Ramsey-type version of the graph coloring problem~\cite{Hirst1990Marriage}
is of particular interest in our study of~$\rwkl$. Indeed,
the $\rwkl$-instances built in~\cite{bienvenu2017logical,flood2016separating,Patey2015Ramsey} are only of two kinds:
trees of positive measure, and trees whose paths code the $k$-colorings
of a computable $k$-colorable graph. The restriction of~$\rwkl$ to the former class of instances
is equivalent to~$\dnr$ over~$\rca$. We shall discuss further the latter one.

\begin{definition}[Ramsey-type graph coloring]{\ }
Let $G = (V,E)$ be a graph.  A set $H \subseteq V$ is {\itshape $k$-homogeneous for $G$} if every finite $V_0 \subseteq V$ induces a subgraph that is $k$-colorable by a coloring that colors every vertex in $V_0 \cap H$ color~$0$.
$\rcolor_k$ is the statement  ``for every infinite, locally $k$-colorable graph $G = (V,E)$, there is an infinite $H \subseteq V$ that is $k$-homogeneous for $G$.''
\end{definition}

The Ramsey-type graph coloring statements have been introduced by Bienvenu et al.~\cite{bienvenu2017logical}.
They proved by an elaborate construction that~$\rcolor_n$ is equivalent to~$\rwkl$ for~$n \geq 3$,
and that~$\dnr$ (or even weak weak K\"onig's lemma) does not imply~$\rcolor_2$ over~$\omega$-models.

\begin{question}\label{quest:rcolor2-rwkl}
Does~$\rcolor_2$ imply~$\rwkl$ or even~$\dnr$ over~$\rca$?
\end{question}

Separating~$\rcolor_2$ from~$\rwkl$ may require constructing an~$\rwkl$-instance of a new kind,
i.e., but not belonging to any of the two classes of instances mentioned before.
The question about the existence of a universal instance also holds for~$\rcolor_2$.

\section{The Erd\H{o}s-Moser theorem}

The Erd\H{o}s-Moser theorem is a statement from graph theory.
It provides together with the ascending descending principle~($\ads$) an alternative decomposition of
Ramsey's theorem for pairs. Indeed, every coloring~$f : [\Nb]^2 \to 2$
can be seen as a tournament~$R$ such that~$R(x,y)$ holds if~$x < y$ and~$f(x,y) = 1$, or~$x > y$ and~$f(y, x) = 0$.
Every infinite transitive subtournament induces a linear order whose infinite ascending or descending
sequences are $f$-homogeneous.

\begin{definition}[Erd\H{o}s-Moser theorem]
A tournament $T$ on a domain $D \subseteq \Nb$ is an irreflexive binary relation on~$D$ such that for all $x,y \in D$ with $x \not= y$, exactly one of $T(x,y)$ or $T(y,x)$ holds. A tournament $T$ is \emph{transitive} if the corresponding relation~$T$ is transitive in the usual sense. A tournament $T$ is \emph{stable} if $(\forall x \in D)[(\forall^{\infty} s) T(x,s) \vee (\forall^{\infty} s) T(s, x)]$.
$\emo$ is the statement ``Every infinite tournament $T$ has an infinite transitive subtournament.''
$\semo$ is the restriction of $\emo$ to stable tournaments.
\end{definition}

Bovykin and Weiermann~\cite{Bovykin2005strength} introduced the Erd\H{o}s-Moser theorem in reverse mathematics
and proved that $\emo + \ads$ is equivalent to $\rt^2_2$ over $\rca$.
Lerman et al.~\cite{Lerman2013Separating} proved that~$\emo$ is strictly weaker than~$\rt^2_2$ over~$\omega$-models.
This separation has been followed by various refinements of the weakness of~$\emo$
by Wang~\cite{wang2016definability} and the author~\cite{Patey2013note,patey2017dominating}.
Bienvenu et al.~\cite{bienvenu2017logical} and Flood \& Towsner~\cite{flood2016separating} 
independently proved that~$\semo$ strictly implies~$\rwkl$ over~$\rca$.

Cholak et al.~\cite{Cholak2001strength} proved that every computable $\rt^2_2$-instance has a low${}_2$ solution,
while Mileti~\cite{Mileti2004Partition} and the author~\cite{Patey2015Degrees} showed that various consequences~$\Psf$ of~$\rt^2_2$
do not have~$\Psf$-bounding low${}_2$ degrees, showing therefore that~$\Psf$ does not have
a universal instance. This approach does not apply to~$\emo$ since there is a low${}_2$ $\emo$-bounding degree~\cite{Patey2015Degrees}.
The Erd\H{o}s-Moser theorem is, together with~$\rwkl$, one of the last Ramsey-type theorems 
for which the existence of a universal instance is unknown. A positive answer to the following question
would refine our understanding of $\rwkl$-bounding degrees. Note that, like $\rwkl$, the only known
forcing notion for building solutions to $\emo$-instances produces $\emo$-bounding degrees.

\begin{question}
Does $\emo$ admit a universal instance?
\end{question}

\update{This question was answered negatively by Liu and Patey in \cite{liu2022reverse}.}
\smallskip

Due to the nature of the decomposition of~$\rt^2_2$ into~$\emo$ and~$\ads$,
the Erd\H{o}s-Moser theorem shares many features with~$\rt^2_2$. In particular,
there is a computable $\semo$-instance with no low solution~\cite{Kreuzer2012Primitive}.
The forcing notion used to construct solutions to $\emo$-instances is very similar to the
one used to construct solutions to~$\rt^2_2$-instances. The main difference is that
in the~$\emo$ case, only one object (a transitive subtournament) is constructed,
whereas in the~$\rt^2_2$ case, both a set homogeneous with color~0 and a set homogeneous with color~1
are constructed. As a consequence, the constructions in the $\emo$ case remove the disjunction
appearing in almost every construction of solutions to~$\rt^2_2$,
and therefore simplifies many arguments, while preserving some computational power.
In particular, the author proved that~$\coh \leq_c \srt^2_2$ if and only if~$\coh \leq_c \semo$.

Considering that~$\semo$ behaves like~$\srt^2_2$ with respect to~$\coh$,
one would wonder whether $\emo$, like~$\rt^2_2$, implies~$\coh$ over~$\rca$.
The closest result towards an answer is the proof that~$\emo$
implies~$[\sts^2 \vee \coh]$ over~$\rca$, where~$\sts^2$ is the restriction of~$\ts^2$
to stable functions~\cite{Patey2015Somewhere}. 
The Erd\H{o}s-Moser theorem is known not to imply~$\sts^2$ over $\omega$-models~\cite{Patey2013note,wang2016definability},
but the following question remains open.

\begin{question}
Does~$\emo$ imply~$\coh$ over~$\rca$?
\end{question}

A natural first step in the study of the computational strength of a principle
consists in looking at how it behaves with respect to ``typical'' sets.
Here, by typical, we mean randomness and genericity. By Liu~\cite{Liu2015Cone},
$\emo$ does not imply the existence of a Martin-Löf random. However, it implies the existence
of a function DNC relative to~$\emptyset'$~\cite{Kang2014Combinatorial,Patey2015Somewhere}, 
which corresponds to the computational power of an infinite subset of a 2-random. 
On the genericity hand, $\emo$ implies the existence of a hyperimmune set~\cite{Lerman2013Separating}
and does not imply $\Pi^0_1$-genericity~\cite{Hirschfeldt2009atomic,patey2017dominating}.
The relation between~$\emo$ and 1-genericity is currently unclear.

\begin{definition}[Genericity]
Fix a set of strings~$S \subseteq 2^{<\Nb}$.
A real~$G$ \emph{meets}~$S$ if it has some initial segment in~$S$.
A real~$G$ \emph{avoids}~$S$ if it has an initial segment with no extension in~$S$.
Given an integer~$n \in \Nb$, a real is~\emph{$n$-generic} if it meets or avoids each~$\Sigma^0_n$ set of strings.
$\gens{n}$ is the statement ``For every set~$X$, there is a real $n$-generic relative to~$X$''.
\end{definition}

1-genericity received particular attention from the reverse mathematical community recently as it happened
to have close connections with the finite intersection principle~($\fip$).
Dazhafarov \& Mummert~\cite{dzhafarov2013strength} introduced~$\fip$ in reverse mathematics.
Day, Dzhafarov, and Miller [unpublished] and Hirschfeldt and Greenberg [unpublished] 
independently proved that it is a consequence of the atomic
model theorem (and therefore of~$\srt^2_2$) over~$\rca$.
Diamondstone et al.~\cite{diamondstone2016finite} first established a link between 1-genericity and~$\fip$
by proving that~$\gens{1}$ implies~$\fip$ over~$\rca$. Later, Cholak et al.~\cite{Cholak2015Any} proved the
other direction.

\begin{question}\label{quest:emo-genericity-rca}
Does~$\emo$ imply~$\gens{1}$ over~$\rca$?
\end{question}

Every 1-generic bounds an $\omega$-model of~$\gens{1}$.
This property is due to a Van Lambalgen-like theorem for genericity~\cite{Yu2006Lowness},
and implies in particular that there is no 1-generic of minimal degree.
Cai et al.~\cite{Cai2015DNR} constructed an $\omega$-model of~$\dnr$
such that the degrees of the second-order part belong to a sequence~$\mathbf{0}, \dbf_1, \dbf_2, \dots$
where~$\dbf_1$ is a minimal degree and~$\dbf_{n+1}$ is a strong minimal cover of~$\dbf_n$.
By the previous remark, such a model cannot contain a 1-generic real. Construcing a similar
model of~$\emo$ would answer negatively Question~\ref{quest:emo-genericity-rca}.
The following question is a first step towards an answer. In particular, 
answering positively would provide a proof that~$\gens{1} \not \leq_c \emo$.

\begin{question}
Does every computable~$\emo$-instance (or even~$\rwkl$-instance) admit 
a solution of minimal degree?
\end{question}

\section{Ramsey-type hierarchies}

Jockusch~\cite{Jockusch1972Ramseys} proved that the hierarchy
of Ramsey's theorem collapses at level 3 over $\omega$-models, that is,
for every~$n, m \geq 3$, $\rt^n_2$ and~$\rt^m_2$ have the same $\omega$-models. 
Simpson~\cite[Theorem III.7.6]{Simpson2009Subsystems} formalized Jockusch's results
within reverse mathematics and proved that $\rt^n_2$
is equivalent to the arithmetic comprehension axiom ($\aca$) over~$\rca$ for every~$n \geq 3$.
Since~$\rt^1_2$ is provable over~$\rca$ and~$\rt^2_2$ is strictly between~$\rt^3_2$ and~$\rca$,
the status of the whole hierarchy of Ramsey's theorem is known.

However, some consequences of Ramsey's theorem form hierarchies
for which the question of the strictness is currently unanswered.
This is for example the case of the thin set theorems, the free set theorem
and the rainbow Ramsey theorem.
The thin set theorems, already introduced, are natural weakenings
of Ramsey's theorem, where the solutions are allowed to use more than one color.
The free set theorem is a strengthening of the thin set theorem over~$\omega$ colors,
in which every member of a free set is a witness of thinness of the same set.
Indeed, if~$H$ is an infinite~$f$-free set for some function~$f$,
for every~$a \in H$, $H \setminus \{a\}$ is $f$-thin with witness color~$a$.
See Theorem~3.2 in~\cite{Cholak2001Free} for a formal version of this claim.

\begin{definition}[Free set theorem]
Given a coloring $f : [\Nb]^n \to \Nb$, an infinite set~$H$
is \emph{free} for~$f$ if for every~$\sigma \in [H]^n$,
$f(\sigma) \in H \imp f(\sigma) \in \sigma$.
For every~$n \geq 1$, $\fs^n$ is the statement
``Every coloring $f : [\Nb]^n \to \Nb$ has a free set''.
$\fs$ is the statement~$(\forall n)\fs^n$.
\end{definition}

The free set theorem has been introduced by Friedman~\cite{FriedmanFom53free}
together with the thin set theorem.
Cholak et al.~\cite{Cholak2001Free}
proved that $\ts^n$ is a consequence of~$\fs^n$ for every~$n \geq 2$.
Wang~\cite{Wang2014Some} proved that the full free set hierarchy (hence the thin set hierarchy) lies strictly below~$\aca$
over~$\rca$. This result was improved by the author~\cite{patey2015combinatorial} who
showed that $\fs$ does not even imply~$\wkl$ (and in fact weak weak König's lemma) over~$\rca$.

\begin{question}
Does the free set theorem (resp.\ the thin set theorem with $\omega$ colors) form a strict hierarchy?
\end{question}

Jockusch~\cite{Jockusch1972Ramseys} proved for every~$n \geq 2$ that every computable~$\rt^n_2$-instance
has a~$\Pi^0_n$ solution and constructed a computable~$\rt^n_2$-instance with no $\Sigma^0_n$ solution.
Cholak et al.~\cite{Cholak2001Free} proved that $\fs^n$ and~$\ts^n$ satisfy the same bounds.
In particular, there is no $\omega$-model of $\fs^n$ or~$\ts^n$ containing only~$\Delta^0_n$ sets.
By Cholak et al.~\cite{Cholak2001strength}, every computable instance of~$\fs^2$ and~$\ts^2$
admit a low${}_2$ solution. Therefore $\fs^2$ (hence $\ts^2$) are strictly weaker than~$\ts^3$ (hence~$\fs^3$) over~$\rca$.
Using this approach, a way to prove the strictness of the hierarchies would be to answer positively the following question.

\begin{question}\label{quest:fs-ts-lown-solution}
For every~$n \geq 3$, does every computable instance of~$\fs^n$ (resp.\ $\ts^n$)
admit a low${}_n$ solution?
\end{question}

The complexity of controlling the $n$th iterate of the Turing jump grows very quickly with~$n$.
While the proof of the existence of low${}_2$ solutions to computable $\rt^2_2$-instances 
using first jump control is relatively simple,
the proof using the second jump control requires already a much more elaborate framework~\cite{Cholak2001strength}.
Before answering Question~\ref{quest:fs-ts-lown-solution}, one may want to get rid of the technicalities
due to the specific combinatorics of the free set and thin set theorems, and focus
on the control of iterated Turing jumps by constructing simpler objects.

Non-effective instances of Ramsey's theorem for singletons is a good starting point,
since the only combinatorics involved are the pigeonhole principle.
Moreover, $\rt^1_2$ can be seen as a bootstrap principle, above which the other Ramsey-type
statements are built. For instance, cohesiveness is proven by making $\omega$ applications
of the~$\rt^1_2$ principles,
and $\rt^2_2$ is obtained by making one more application of~$\rt^1_2$ over a non-effective instance.
The proofs of the free set and thin set theorems also make an important use of non-effective instances of~$\rt^1_2$~\cite{patey2015combinatorial,Wang2014Some}.

\begin{question}
For every~$n \geq 3$, does every~$\Delta^0_n$ set admit an infinite low${}_n$ subset of either it or its complement?
\end{question}

The solutions to Ramsey-type instances are usually built by forcing.
In order to obtain low${}_n$ solutions, one has in particular 
to $\emptyset^{(n)}$-effectively decide the $\Sigma^0_n$ theory of the generic set.
The forcing relation over a partial order is defined inductively,
and intuitively expresses whether, whatever the choices of conditions extensions we make
in the future, we will still be able to make some progress in satisfying the considered property.

This raises the problem of talking about the ``future'' of a condition~$c$. 
To do that, one needs to be able to describe effectively the conditions extending~$c$.
The problem of the forcing notions used to build solutions to Ramsey-type instances
is that they use variants of Mathias forcing, whose conditions cannot be described effectively.
For instance, let us take the simplest notion of Mathias forcing: pairs~$(F, X)$
where $F$ is a finite set of integers representing the finite approximation of the solution, 
$X$ is a computable infinite ``reservoir'' of integers and~$max(F) < min(X)$. Given a condition~$c = (F, X)$,
the extensions of~$c$ are the pairs~$(H, Y)$ such that~$F \subseteq H$, $Y \subseteq X$
and~$H \setminus F \subseteq X$. Deciding whether a Turing index is the code of 
an infinite computable subset of a fixed computable set requires
a lot of computational power. Cholak et al.~\cite{Cholak2014Generics} studied computable Mathias forcing
and proved that the forcing relation for deciding $\Sigma^0_n$ properties is not~$\Sigma^0_n$ in general.
This is why we need to be more careful in the design of the forcing notions.

In some cases, the reservoir has a particular shape. 
Through their second jump control, Cholak et al.~\cite{Cholak2001strength} first used this idea by noticing
that the only operations over the reservoir were partitioning and finite truncation.
This idea has been reused by Wang~\cite{Wang2014Cohesive} to prove that every computable
$\seqs{\mathsf{D}^2_2}$-instance has a solution of low${}_3$ degree.
Recently, the author~\cite{patey2017controlling} designed new forcing notions
for various Ramsey-type statements, e.g., $\coh$, $\emo$ and $\mathsf{D}^2_2$, in which the forcing relation for
deciding~$\Sigma^0_n$ properties is~$\Sigma^0_n$.

\section{Proofs}

In this last section, we provide the proofs supporting
some claims which have been made throughout the discussion.
We first clarify the links between sequential variants
of the thin set theorem and diagonally non-computable functions.

\begin{lemma}\label{lem:seqts1-computed-by-dnc}
For every instance $\vec{f}$ of~$\seqs{\ts^1}$
and every set~$C$ whose jump is of DNC degree relative to the jump of~$\vec{f}$,
$C \oplus \vec{f}$ computes a solution to~$\vec{f}$.
\end{lemma}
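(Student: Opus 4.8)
The plan is to build the solution~$H$ by committing, one color at a time, to a color to be avoided for each~$f_n$, while maintaining an infinite ``reservoir'' of admissible future elements; the DNC hypothesis will be used precisely to guarantee that the reservoir stays infinite. First note that an infinite set~$H$ is a solution to $\vec{f} = (f_n)_{n \in \Nb}$ exactly when for every~$n$ there is a color~$c_n$ with $H \cap f_n^{-1}(c_n)$ finite. For a finite sequence of colors $\sigma = \langle c_0,\dots,c_{k-1}\rangle$, set $X_\sigma = \Nb \setminus \bigcup_{n<k} f_n^{-1}(\sigma(n))$, which is uniformly $\vec{f}$-computable. I would construct a path $\langle\rangle = \sigma_0 \subseteq \sigma_1 \subseteq \dots$ with $|\sigma_n| = n$ and each $X_{\sigma_n}$ infinite, and then take $H = \{x_0 < x_1 < \dots\}$ with $x_s = \min\{x > x_{s-1} : x \in X_{\sigma_s}\}$; since $X_{\sigma_0} \supseteq X_{\sigma_1} \supseteq \dots$ and $X_{\sigma_s} \cap f_n^{-1}(\sigma_s(n)) = \emptyset$ for $n < s$, each $f_n^{-1}(\sigma_{n+1}(n))$ meets~$H$ only in $\{x_0,\dots,x_n\}$, so $H$ is a solution with witnesses $c_n = \sigma_{n+1}(n)$.

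The DNC hypothesis enters in the choice of $\sigma_{n+1}$ from~$\sigma_n$. For a string~$\sigma$ with $X_\sigma$ infinite there is at most one ``bad'' color, namely the unique~$c$ (if any) with $X_\sigma \subseteq^* f_{|\sigma|}^{-1}(c)$; any other color keeps the reservoir infinite. The family $W_\sigma = \{c : X_\sigma \text{ is infinite and } X_\sigma \subseteq^* f_{|\sigma|}^{-1}(c)\}$ is uniformly~$\Sigma^0_2$ in~$\vec{f}$, hence uniformly c.e.\ in~$\vec{f}'$, and satisfies $|W_\sigma| \leq 1$. By the standard recursion-theorem argument, a function DNC relative to~$\vec{f}'$ computes a function~$p$ with $p(\sigma) \notin W_\sigma$ for every~$\sigma$; setting $\sigma_{n+1} = \sigma_n \concat p(\sigma_n)$ then keeps every~$X_{\sigma_n}$ infinite, by induction from $X_{\langle\rangle} = \Nb$. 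By hypothesis $C'$ computes a function DNC relative to~$\vec{f}'$, so $p \leq_T C'$.

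The remaining --- and main --- difficulty is to get $H \leq_T C \oplus \vec{f}$ rather than merely $H \leq_T C' \oplus \vec{f}$, since computing~$p$ directly uses the jump of~$C$. Here I would replace the construction above by a limit construction governed by an approximation to~$p$: fix a $(C \oplus \vec{f})$-computable approximation $p^{(t)} \to p$ (Shoenfield's limit lemma), form at each stage~$t$ the approximate path $\sigma^{(t)}$ via $\sigma^{(t)}_{m+1} = \sigma^{(t)}_m \concat p^{(t)}(\sigma^{(t)}_m)$, and enumerate~$H$ by adding at stage~$t$ the least element beyond the current finite part of~$H$, below a search bound~$t$, that lies in~$X_{\sigma^{(t)}_{\ell_t}}$, where~$\ell_t$ is a slowly growing length chosen by a length-of-agreement rule so that $\sigma^{(t)}_{\ell_t}$ is eventually exactly the true~$\sigma_{\ell_t}$. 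The key observation is that a wrong guess at stage~$t$ (picking the bad color for some~$f_n$) forces at most one spurious element to be \emph{permanently} added to~$H$; since for each fixed~$n$ the length-$(n+1)$ prefix of the path stabilizes after finitely many stages and $\ell_t \geq n+1$ from some stage on, only finitely many elements of~$H$ ever lie in~$f_n^{-1}(c_n)$, so $H \cap f_n^{-1}(c_n)$ stays finite. Meanwhile, once the relevant prefix has stabilized and~$\ell_t$ is large, the true reservoir~$X_{\sigma_{\ell_t}}$ is infinite, so elements keep being added and~$H$ is infinite. This produces a $(C \oplus \vec{f})$-computable infinite set that is, up to finitely many elements, thin for every~$f_n$, i.e.\ a solution to~$\vec{f}$. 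I expect the only real obstacle to be the bookkeeping: calibrating~$\ell_t$ and the search bounds so that~$H$ is provably infinite and no requirement is disturbed infinitely often, which is handled by a standard modulus argument.
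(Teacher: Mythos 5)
Your proof is correct and takes essentially the same route as the paper: shrink a reservoir one color at a time, avoiding at each step the unique "limit color" via the DNC hypothesis, then use Shoenfield's limit lemma together with a modulus argument to bring the computation down from $C' \oplus \vec{f}$ to $C \oplus \vec{f}$. The paper implements the final step with the tidier Jockusch--Stephan device $x_{s+1} = \min\{x > x_s : (\forall m \leq s)\, x \in A_{e_{m,x}}\}$, which sidesteps the explicit length-of-agreement bookkeeping you sketch, but the idea is the same.

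One technical slip: $W_\sigma$ as you defined it is not $\Sigma^{0,\vec{f}}_2$, since the conjunct ``$X_\sigma$ is infinite'' is $\Pi^{0,\vec{f}}_2$. You should drop that conjunct and take $W_\sigma = \{c : X_\sigma \subseteq^* f^{-1}_{|\sigma|}(c)\}$, which is genuinely $\Sigma^{0,\vec{f}}_2$ (hence uniformly c.e.\ in $\vec{f}'$); the bound $|W_\sigma| \leq 1$ then only holds when $X_\sigma$ is infinite, but that is automatic along the inductively-maintained path, which is the only place you invoke the avoidance. With that correction the argument goes through.
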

\begin{proof}
Fix a $\seqs{\ts^1}$-instance $f_0, f_1, \dots : \Nb \to \Nb$.
Let~$A_0, A_1, \dots$ be a uniformly $\vec{f}$-computable sequence of sets
containing~$\Nb$, the sets~$\{ x : f_s(x) \neq i \}$ for each~$s,i \in \Nb$,
and that is closed under the intersection and complementation operations.
Let~$g$ be the partial~$\vec{f}'$-computable function
defined for each~$s, i \in \Nb$ by $g(i,s) = \lim_{n \in A_i} f_s(n)$
if it exists. The jump of~$C$ computes a function~$h(\cdot, \cdot)$
such that~$h(s,i) \neq g(s,i)$ for each~$s, i \in \Nb$.

Let~$e_0, e_1, \dots$ be a $C'$-computable sequence defined inductively by~$A_{e_0} = \Nb$
and~$A_{e_{s+1}} = A_{e_s} \cap \{x : f_s(x) \neq g(e_s,s) \}$.
By definition of~$g$, if~$A_{e_s}$ is infinite, then so is~$A_{e_{s+1}}$.
By Shoenfield's limit lemma~\cite{Shoenfield1959degrees}, the~$e$'s have
a uniformly $C$-computable approximation~$e_{s,0}, e_{s,1}, \dots$ such that~$\lim_t e_{s,t} = e_s$.

We now use the argument of Jockusch \& Stephan in~\cite[Theorem 2.1]{Jockusch1993cohesive}
to build an infinite set~$X$ such that~$X \subseteq^{*} A_{e_s}$ for all~$s \in \Nb$.
Let
\[
x_0 = 0 \hspace{10pt} \mbox{ and } \hspace{10pt} x_{s+1} = min\{x > x_s : (\forall m \leq s)[x \in A_{e_{m, x}}]\}
\]
Assuming that~$x_s$ is defined, we prove that~$x_{s+1}$ is found.
Since~$A_{e_{s+1}} \subseteq A_{e_m}$ for all~$m \leq s$, any sufficiently large element
satisfies~$x > x_s : (\forall m \leq s)[x \in A_{e_{m, x}}]$. Therefore~$x_{s+1}$ is defined.
The set~$X = \{x_0, x_1, \dots\}$ is a solution to~$\vec{f}$ by definition of the sequence $A_{e_0}, A_{e_1}, \dots$
\end{proof}

\begin{lemma}\label{lem:seqts1-jump-dnczp}
Fix a set~$X$ and let~$\vec{f}$ be the instance of~$\seqs{\ts^1}$ consisting
of all $X$-primitive recursive functions. For every solution~$C$ to~$\vec{f}$,
$(X \oplus C)'$ is of DNC degree relative to~$X'$.
\end{lemma}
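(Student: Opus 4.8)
The plan is to run, in reverse, the construction behind Lemma~\ref{lem:seqts1-computed-by-dnc}; this is the thin-set analogue of the ``hard'' half of Jockusch \& Stephan's characterisation of p-cohesive degrees. I will build, uniformly from~$(X\oplus C)'$, a function~$h$ that is DNC relative to~$X'$. Fix an index~$e$. The heart of the argument is an $X$-primitive recursive function~$f_e$ that ``broadcasts'' the (possibly undefined) value~$\Phi^{X'}_e(e)$: on input~$n$, $f_e$ first computes the stage-$n$ approximation $X'_n = \{ i < n : \Phi^X_i(i)\text{ halts within }n\text{ steps}\}$, which is primitive recursive in~$X$, then simulates~$\Phi_e(e)$ with oracle~$X'_n$ for~$n$ steps, and outputs the result if this halts and~$0$ otherwise. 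If $\Phi^{X'}_e(e)\!\downarrow\,=v$ with use~$u$ and running time~$t$, then for every~$n$ large enough that $X'_n$ agrees with~$X'$ below~$u$ and $n\geq t$, the simulation reproduces the genuine computation verbatim and hence $f_e(n)=v$; so $f_e$ takes the value~$v$ on a \emph{cofinite} set.

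Now I use that~$f_e$, being $X$-primitive recursive, occurs among the functions of the instance~$\vec f$, so~$C$ is thin for~$f_e$ up to finite changes: there are a finite set~$E$ and a colour~$c$ with $c\notin f_e(C\setminus E)$. The next observation is that, uniformly in~$e$, in~$c$, and in a canonical index for a finite set~$E$, the predicate ``$c\notin f_e(C\setminus E)$'' is $\Pi^0_1(X\oplus C)$, hence decided by~$(X\oplus C)'$. So I define~$h(e)$ to be the colour~$c$ of the first pair~$(c,E)$, in some fixed effective enumeration of colours and canonical indices of finite sets, for which $(X\oplus C)'$ confirms that $c\notin f_e(C\setminus E)$; the search terminates by the preceding sentence, so~$h$ is total and computable from~$(X\oplus C)'$.

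It then remains to check that~$h$ is DNC relative to~$X'$: if $\Phi^{X'}_e(e)\!\downarrow\,=v$ and $(c,E)$ is the pair found for~$e$, then since $C\setminus E$ is infinite and~$f_e$ equals~$v$ on a cofinite set there is some $n\in C\setminus E$ with $f_e(n)=v$, whence $v\in f_e(C\setminus E)$ and so $h(e)=c\neq v$. Thus $(X\oplus C)'$ computes a function DNC relative to~$X'$, which is exactly the claim.

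The one point requiring genuine care is the design of~$f_e$: it must be \emph{primitive} recursive in~$X$, not merely $X$-computable, and its broadcast of~$\Phi^{X'}_e(e)$ must hold on a cofinite, not merely infinite, set — it is precisely this cofiniteness that survives the ``up to finite changes'' relaxation in the definition of~$\seqs{\ts^1}$ and forces the avoided colour to differ from~$\Phi^{X'}_e(e)$. The remaining steps are routine estimates of the arithmetical complexity of the predicates involved.
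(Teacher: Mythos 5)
Your proposal is correct and follows essentially the same approach as the paper: define $X$-primitive recursive ``broadcast'' functions $f_e(s) = \Phi^{X'_s}_{e,s}(e)$ that equal $\Phi^{X'}_e(e)$ on a cofinite set when the latter converges, observe these occur in $\vec f$, and extract from $(X\oplus C)'$ the avoided color, which must then differ from $\Phi^{X'}_e(e)$. Your write-up is a bit more careful than the paper's (you spell out the $\Pi^0_1(X\oplus C)$ search and why it terminates, and you correctly say ``DNC relative to $X'$'' where the paper's final sentence has a typographical $\emptyset'$), but the argument is the same.
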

\begin{proof}
Fix a set~$X$, and 
consider the uniformly $X$-primitive recursive sequence of colorings $f_0, f_1, \dots : \Nb \to \Nb$ defined
for each~$e \in \Nb$ by $f_e(s) = \Phi^{X'_s}_{e,s}(e)$,
where~$X'_s$ is the approximation of the jump of~$X$ at stage~$s$.
Let~$C$ be a solution to the $\seqs{\ts^1}$-instance~$\vec{f}$.
By definition, for every~$e \in \Nb$, there is some~$i$ such that~
$(\forall^{\infty} s)f_e(s) \neq i$. Let~$g : \Nb \to \Nb$
be the function which on input~$e$ makes a~$(X \oplus C)'$-effective search for such~$i$ and outputs it.
The function~$g$ is DNC relative to~$\emptyset'$.
\end{proof}

\begin{theorem}
For every standard~$k$, $\seqs{\ts^1_k} =_c \coh$.
\end{theorem}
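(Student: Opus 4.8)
The plan is to prove the two computable reductions $\seqs{\ts^1_k} \leq_c \coh$ and $\coh \leq_c \seqs{\ts^1_k}$, which together give $\seqs{\ts^1_k} =_c \coh$. For the first I would give a direct combinatorial encoding: given a $\seqs{\ts^1_k}$-instance $f_0, f_1, \dots : \Nb \to k$, form the $\vec{f}$-computable sequence $\vec{R}$ with $R_{\langle i,c\rangle} = \{x : f_i(x) = c\}$ for $i \in \Nb$, $c < k$, and regard $\vec{R}$ as a $\coh$-instance. If $C$ is $\vec{R}$-cohesive then, for a fixed $i$, one cannot have $C \subseteq^{*} \overline{R_{\langle i,c\rangle}}$ for every $c < k$, since $\bigcap_{c<k}\{x : f_i(x) \neq c\}$ is empty; hence $C \subseteq^{*} R_{\langle i,c\rangle}$ for some $c$, so that $C$ becomes $f_i$-constant---in particular $f_i$-thin---once finitely many elements are deleted. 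Thus $C$ is itself a solution to $\vec{f}$, and since $\vec{R}$ is $\vec{f}$-computable this is the desired reduction. (For $k = 2$ this is just the observation that a $\seqs{\ts^1_2}$-instance \emph{is} a $\coh$-instance.)

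For $\coh \leq_c \seqs{\ts^1_k}$ no such encoding is available once $k \geq 3$: a $k$-valued thin set may avoid a colour that some $f_i$ never realises, and so need not resolve even a single cohesiveness question. Instead I would route through Lemma~\ref{lem:seqts1-jump-dnczp}. Given a $\coh$-instance $\vec{R}$, I would build the $k$-colour analogue of the instance of that lemma: the uniformly $\vec{R}$-primitive recursive sequence $f_e(s) = \Phi^{\vec{R}'_s}_{e,s}(e)$, truncated to the value $0$ whenever this diverges or is $\geq k$. Its proof is that of Lemma~\ref{lem:seqts1-jump-dnczp}, the truncation being harmless because $k$-valued DNC-ness only constrains those $e$ with $\Phi^{\vec{R}'}_e(e) \in \{0,\dots,k-1\}$; so every solution $C$ to this $\seqs{\ts^1_k}$-instance has $(\vec{R}\oplus C)'$ of $k$-valued DNC degree relative to $\vec{R}'$.

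It then remains to pass from this to ``$\vec{R}\oplus C$ computes a solution to $\vec{R}$''. I would invoke a relativization of Friedberg's theorem~\cite{JockuschJr1989Degrees} that the degrees whose jump bounds a $k$-valued DNC function coincide with those whose jump is of PA degree: relativized to $\vec{R}$, this gives that $(\vec{R}\oplus C)'$ is of PA degree relative to $\vec{R}'$. Then the relativization to $\vec{R}$ of the theorem of Jockusch \& Stephan~\cite{Jockusch1993cohesive} that a degree computes a p-cohesive set exactly when its jump is of PA degree relative to $\emptyset'$ yields that $\vec{R}\oplus C$ computes an $\vec{R}$-p-cohesive set---hence an $\vec{R}$-cohesive set, i.e.\ a solution to $\vec{R}$. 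Since the $\seqs{\ts^1_k}$-instance constructed is $\vec{R}$-computable, this is the desired reduction.

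The step I expect to be the real obstacle is this last one when $k \geq 3$: the collapse of ``$k$-valued DNC at the level of a jump'' down to PA (equivalently, to $2$-valued DNC). Everything else is either elementary bookkeeping (the first direction) or a cosmetic truncation of Lemma~\ref{lem:seqts1-jump-dnczp} (the second); it is precisely because a $k$-valued thin set is combinatorially too weak to carry a single bit of cohesiveness information for $k \geq 3$ that one is driven through the jump and onto the Friedberg-style collapse, rather than being able to write down an explicit $\coh$-instance by hand. This is also where the hypothesis that $k$ be standard enters.
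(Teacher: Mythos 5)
Your proof is correct and takes essentially the same route as the paper's: both directions reduce to the same DNC-at-the-jump/Friedberg/Jockusch--Stephan chain for $\coh \leq_c \seqs{\ts^1_k}$, and a direct pigeonhole encoding for $\seqs{\ts^1_k} \leq_c \coh$. Two minor comparisons: in the easy direction the paper gets away with a single set $R_e = \{x : f_e(x) = 0\}$ per colouring rather than your $k$ sets $R_{\langle i,c\rangle}$ (either works, but you don't need the full pigeonhole argument: thinness only asks that \emph{some} colour be avoided, which one test set already certifies); and in the hard direction your choice of base oracle $\vec{R}'$ in $f_e(s) = \Phi^{\vec{R}'_s}_{e,s}(e)$ is in fact the more careful relativization for non-computable $\coh$-instances, where the paper writes $\emptyset'$, and your truncation versus the paper's $\bmod\ k$ are interchangeable.
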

\begin{proof}
$\coh \leq_c \seqs{\ts^1_k}$: Let~$R_0, R_1, \dots$ be a $\coh$-instance. 
Let~$f_0, f_1, \dots : \Nb \to k$ be the uniformly $\vec{R}$-computable sequence
of functions defined for each~$e \in \Nb$ by~$f_e(s) = \Phi^{\emptyset'_s}_{e,s}(e) \mod k$.
By slightly modifying the proof of Lemma~\ref{lem:seqts1-jump-dnczp}, for every solution~$C$
to the $\seqs{\ts^1_k}$-instance~$\vec{f}$, $(\vec{R} \oplus C)'$ computes
a $k$-valued function DNC relative to~$\vec{R}'$. 
By Friedberg~\cite{jockusch1989degrees} in a relativized form, $(\vec{R} \oplus C)'$ is of PA degree relative to~$\emptyset'$.
Jockusch \& Stephan~\cite{Jockusch1993cohesive} proved that every set whose jump is of PA degree relative to~$\emptyset'$
computes a solution to~$\vec{R}$. Using this fact, $\vec{R} \oplus C$ computes an infinite~$\vec{R}$-cohesive set.

$\seqs{\ts^1_k} \leq_c \coh$: 
Let~$f_0, f_1, \dots$ be a $\seqs{\ts^1_k}$-instance.
Let~$R_0, R_1, \dots$ be the $\vec{f}$-computable sequence of sets
defined for each~$e,x \in \Nb$ by~$x \in R_e(x)$ if and only if~$f_e(x) = 0$.
Every infinite~$\vec{R}$-cohesive set is a solution to~$\vec{f}$.
\end{proof}

\begin{theorem}
There is an $\omega$-model of~$\seqs{\ts^1}$ that is not a model of~$\coh$.
\end{theorem}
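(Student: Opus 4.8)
The goal is to construct an $\omega$-model of $\seqs{\ts^1}$ that omits some $\coh$-instance. The key contrast to exploit is the one already highlighted in the excerpt: solutions to $\seqs{\ts^1}$-instances require only that the \emph{jump} be of DNC degree relative to the jump of the instance (Lemma~\ref{lem:seqts1-computed-by-dnc}), whereas a solution to the universal $\coh$-instance requires a jump of PA degree relative to~$\emptyset'$. Since DNC is strictly weaker than PA (being $\{0,1\}$-valued DNC is equivalent to PA, but full DNC is not), there is room to build a model where every set's jump is of DNC degree relative to~$\emptyset'$ but where some set fails to compute a p-cohesive set. The plan is therefore to iterate the closure operation from Lemma~\ref{lem:seqts1-computed-by-dnc} while simultaneously maintaining, at the level of jumps, a ``cone-avoidance-for-PA'' or ``DNC-but-not-PA'' invariant, transferred down through the limit lemma.

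First I would set up the iteration. Start with~$\Scal_0 = \{\emptyset\}$. At each stage~$s$, given the countable collection~$\Scal_s$ of sets already in the model, pick a $\seqs{\ts^1}$-instance~$\vec f \in \Scal_s$ (or a set to be added for second-order closure, handled by a bookkeeping enumeration), and adjoin a solution~$C$ to~$\vec f$. By Lemma~\ref{lem:seqts1-computed-by-dnc}, it suffices to choose~$C$ so that~$C'$ is of DNC degree relative to the jump of~$\vec f$ (equivalently, relative to~$\emptyset'$ uniformly, once we fix a join of everything so far). The crucial point is that we must choose such a~$C$ so that~$(D \oplus C)'$ is \emph{not} of PA degree relative to~$\emptyset'$, for every~$D$ already in~$\Scal_s$; by Jockusch~\&~Stephan this guarantees that no element of the final model computes a p-cohesive set, hence the model fails~$\coh$. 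So at the level of the jumps, this is exactly a cone-avoidance/basis argument: given~$\emptyset'$ and a countable ideal~$\Ical$ of sets below~$\emptyset'$-plus-stuff, find a set~$G$ (which will play the role of~$C'$) that is DNC relative to~$\emptyset'$ but such that~$G$ joined with anything in~$\Ical$ is not of PA degree relative to~$\emptyset'$. This is a known kind of result: the $\Pi^0_1$ class of $\{0,1\}$-valued DNC functions (i.e. PA degrees) admits cone avoidance and more, and one can build a (non-$\{0,1\}$-valued) DNC function that avoids bounding any completion of PA relative to the given parameter. One then pulls~$G$ back through Shoenfield's limit lemma to get a~$C$ with~$C' \equiv_T G$ — here one must be slightly careful that the limit-lemma pullback does not inadvertently add PA-in-$\emptyset'$ power to $(D\oplus C)'$; using a sufficiently generic or sufficiently low pullback, or absorbing~$D$ into the parameter of the jump-level construction, handles this.

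The hard part will be the jump-level construction itself and its bookkeeping: one needs, uniformly over the iteration, to produce~$C$ with~$C'$ of DNC degree relative to~$(\bigoplus \Scal_s)'$ while forcing~$(D \oplus C)'$ to omit PA-in-$\emptyset'$ for all~$D \in \Scal_s$, and to do this in a way that the resulting countable collection is closed under Turing reducibility and join (so that it is genuinely the second-order part of an $\omega$-model of $\rca$). Concretely I would either (i) invoke an already-known preservation result — ``there is a DNC function relative to~$X$ whose join with any member of a given countable Scott ideal is not PA relative to~$X$'' — and then thread it through the limit lemma, or (ii) build the whole tower of jumps first as a model-like ideal above~$\emptyset'$ satisfying ``$\dnr$ but not $\{0,1\}$-valued $\dnr$ in a cone-avoiding way'', then realize each level as a jump via a pullback. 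Option (i) is cleaner if the cited preservation result is available in the form needed; option (ii) is more self-contained but requires verifying that the tower pulls back coherently. In either case the combinatorics is entirely at the level of DNC-vs-PA degree theory — there are no thin-set-specific difficulties left once Lemma~\ref{lem:seqts1-computed-by-dnc} has been applied — so the remaining work is to cite or reprove the appropriate cone-avoidance statement for PA degrees relative to~$\emptyset'$ and to check the limit-lemma transfer. That transfer, and making sure the avoidance invariant is preserved under joining with previously-added sets, is where I expect the only real care to be required.
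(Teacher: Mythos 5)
Your route is genuinely different from the paper's, and it has an unresolved obstacle that you flag but substantially underestimate. The paper does not build the model by an iterated preservation argument at all: it observes that the class of oracles whose jump is PA over $\emptyset'$ has measure zero (Ku\v{c}era / relativized Jockusch--Soare), picks a single 2-random $Z$ with $Z'$ not of PA degree over $\emptyset'$, and then invokes Van Lambalgen's theorem to split $Z$ into columns so that the ideal bounded by $Z$ is an $\omega$-model of ``for every $X$ there is an $X'$-ML-random.'' By Kjos-Hanssen and Greenberg--Miller this gives $\dnrs{2}$, and Lemma~\ref{lem:seqts1-computed-by-dnc} then yields $\seqs{\ts^1}$. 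The crucial payoff of this one-shot construction is that the ``$\coh$-avoidance'' invariant is automatic: every $D$ in the model satisfies $D\leq_T Z$, hence $D'\leq_T Z'$, hence $D'$ is not PA over $\emptyset'$ -- no preservation argument at the jump level is ever needed, for any $D$ or any join.

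The gap in your iterative plan is precisely the step you defer to ``real care'': given the current ideal $\Scal_s$ and the target instance $\vec f$, you must produce $C$ such that $C'$ is DNC over $\vec f'$ while $(D\oplus C)'$ is not PA over $\emptyset'$ for every $D\in\Scal_s$. Working ``at the level of jumps'' and then pulling back through Shoenfield's limit lemma or Friedberg jump inversion does not control $(D\oplus C)'$, because the jump does not commute with joins: jump inversion gives you a $C$ with $C'\equiv_T C\oplus\emptyset'$, but says nothing about $(D\oplus C)'$, which can in general sit strictly above $D'\oplus C'$. Controlling $(D\oplus C)'$ is a first-jump-control problem (in the sense of Cholak--Jockusch--Slaman), not an off-the-shelf cone-avoidance citation, and it has to be carried out simultaneously against all $D$ in the ideal and jointly with forcing $C'$ to be DNC over $\vec f'$; there is also tension between wanting $C'$ to be computationally strong (DNC over $\vec f'$) and wanting $(D\oplus C)'$ to be computationally weak (not PA over $\emptyset'$). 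Your option (ii), realizing an abstract ideal of jumps by a coherent tower of pullbacks, faces the same problem: the jump map does not induce an ideal-homomorphism, so ``realize each level as a jump'' is not a routine transfer. None of this is to say an iterative construction is impossible, but it would require a new jump-control forcing, not merely threading existing cone avoidance through the limit lemma. The paper's measure-theoretic route was chosen precisely because it avoids this machinery.
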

\begin{proof}
By Ku\v{c}era~\cite{Kucera1985Measure} or Jockusch \& Soare~\cite{Jockusch197201} in a relativized form, the measure of oracles
whose jump is of PA degree relative to~$\emptyset'$ is null.
Let~$Z$ be a 2-random (a Martin-L\"of random relative to~$\emptyset'$)
whose jump is not of PA degree relative to~$\emptyset'$.
By Van Lambalgen~\cite{VanLambalgen1990axiomatization}, $Z$ bounds the second-order part of an
$\omega$-model of the statement~``For every set $X$, there is a Martin-L\"of random
relative to~$X'$.''. In particular, by Kjos-Hanssen~\cite{kjoshanssen2009infinite} 
and Greenberg \& Miller~\cite{Greenberg2009Lowness}, $\Mcal \models \dnrs{2}$.
By Lemma~\ref{lem:seqts1-computed-by-dnc}, 
$\Mcal \models \seqs{\ts^1}$, and by Jockusch \& Stephan~\cite{Jockusch1993cohesive},
$\Mcal \not \models \coh$.
\end{proof}

\begin{theorem}\label{thm:ts1-combinatorial-dnc}
For every set~$X$, there is an~$X'$-computable $\ts^1$-instance $f : \Nb \to \Nb$
such that every infinite $f$-thin set computes a function DNC relative to~$X$.
\end{theorem}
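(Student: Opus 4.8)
The plan is to encode the $\Pi^{0,X}_1$ class of functions DNC relative to $X$ directly into a colouring, using the oracle $X'$ to resolve the $\Sigma^{0,X}_1$ information about which self-computations $\Phi^X_e(e)$ converge.

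First I would reduce to a bounded target: it suffices to build $f$ so that every infinite $f$-thin set computes some $\tilde D$ with $\tilde D(e)\le e+1$ for all $e$ and $\tilde D(e)\ne\Phi^X_e(e)$ whenever $\Phi^X_e(e)\downarrow$. Such a $\tilde D$ is automatically DNC relative to $X$ (if $\Phi^X_e(e)\downarrow$ and $\Phi^X_e(e)>e+1$ then $\tilde D(e)\le e+1<\Phi^X_e(e)$), and working with these bounded functions lets the whole construction sit on a \emph{computable} partition of $\Nb$. Accordingly I would partition $\Nb$ computably into consecutive intervals $I_0<I_1<\cdots$ whose elements are indexed by the strings $\tau$ of length $e+1$ with $\tau(j)\le j+1$ for each $j\le e$, so that $|I_e|=\prod_{j\le e}(j+2)$, a computable number; write $I_e=\{c_{e,\tau}\}$. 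Using $X'$, call such a $\tau$ \emph{live} if $\tau(j)\ne\Phi^X_j(j)$ for every $j\le e$ with $\Phi^X_j(j)\downarrow$; the live strings form an $X'$-computable tree $T_X$ whose infinite branches are exactly the bounded functions described above, and every node of $T_X$ of length $e$ has at least $e+1$ immediate successors in $T_X$.

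The colouring $f$ — defined using $X'$ — must then be arranged so that for \emph{every} colour $i$, every infinite $H\subseteq\Nb\setminus f^{-1}(i)$ computes a branch of $T_X$. The guiding idea is: (a) the elements $c_{e,\tau}$ with $\tau$ not live are distributed among colour classes — organised by the least $j\le e$ witnessing non-liveness, and reused across all intervals — which are themselves richly structured, so that deleting any one of them still leaves a full copy of the live tree inside cofinitely many intervals; and (b) the elements $c_{e,\tau}$ with $\tau$ live are coloured so that any infinite thin set not already confined to live strings becomes so after discarding one colour, after which the indices $\tau$ that $H$ realizes can be knitted into a single branch that $H$ itself computes. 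One exploits here that $H$, although it cannot read the colour of any of its elements, \emph{can} read off the layout index $\tau$ of each of its elements, and that the bushiness of $T_X$ leaves slack for the deletion of one colour.

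I expect the main obstacle to be exactly the design in the previous paragraph. A thin set over infinitely many colours avoids only a \emph{single} colour, so every colour class must be simultaneously large enough that avoiding it constrains $H$ and structured enough that its complement has no simple infinite subset; the decoding $H\mapsto\tilde D$ must be one fixed Turing functional, independent of which colour is avoided, and — crucially — it must run in $H$, not in $H'$, which rules out simply applying König's lemma to the $H$-computable subtree of $T_X$ spanned by the strings occurring in $H$. Making the colouring do this work, and in particular organising the interior (non-live) colour classes recursively so that discarding one never disconnects the live tree, is the combinatorial heart of the proof, and carrying it out is what forces the bushy-tree counting estimate fixing how large $|I_e|$ must be. Once the colouring has the stated property, verifying that each infinite $f$-thin set computes $\tilde D$, hence a function DNC relative to $X$, is a routine level-by-level bookkeeping argument.
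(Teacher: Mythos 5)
Your proposal is incomplete at exactly the point you yourself flag as ``the combinatorial heart of the proof.'' You set up a bushy $X'$-computable tree $T_X$ of bounded $X$-DNC functions, partition $\Nb$ into intervals $I_e$ indexed by the level-$e$ nodes, and then assert that the colouring must be designed so that (a) the non-live colour classes are organised so that removing any one colour still leaves a copy of the live tree, and (b) the live elements are coloured so that a thin set's indices ``can be knitted into a single branch that $H$ itself computes.'' But you never specify the colouring; you describe desiderata, note that the naive approach (K\"onig's lemma on the $H$-spanned subtree of $T_X$) only gives an $H'$-computation, and stop. The knitting step in particular is where the content lives: a thin set $H$ is only constrained by avoiding one colour, and you have not shown how this forces the string-indices of $H$'s elements across successive intervals to cohere into a single path, let alone $H$-computably. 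As written, this is a plan, not a proof, and I cannot judge whether filling in the plan is possible.

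The paper's proof takes a different and more elementary route that sidesteps the tree machinery entirely. Fix the $X'$-computable totalisation $g(e)=\Phi^X_e(e)$ (with $g(e)=0$ when the computation diverges) and a family $(D_{e,i})$ of finite sets of prescribed growing sizes. The colouring is built in stages: at the stage handling the pair $(e,i)$, a single fresh element of $D_{g(e),i}$ is given colour $i$; the freshness is guaranteed by a size bound on $D_{g(e),i}$ relative to the current domain. The decoding is then one line: given an infinite $H$ thin for $f$ with witness colour $i$, let $h(e)$ be (the index of) a sufficiently large finite subset of $H$ with $D_{h(e),i}\subseteq H$. If $h(e)=\Phi^X_e(e)=g(e)$, then at the corresponding stage an element of $D_{g(e),i}=D_{h(e),i}\subseteq H$ was coloured $i$, contradicting thinness; so $h$ is DNC relative to $X$. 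The key conceptual difference from your approach: instead of trying to make $H$ compute a path through a tree of DNC functions, the paper uses finite-set indices as DNC \emph{values} directly, and uses the colouring to poison the finite sets indexed by $\Phi^X_e(e)$ so that they cannot be subsets of $H$. This eliminates the coherence-across-levels problem that stalls your construction, and in particular there is nothing to ``knit.''

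So: genuine gap, and a genuinely different (and, I think, substantially harder) approach than the one the paper actually uses.
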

\begin{proof}
Fix a set~$X$ and let~$g : \Nb \to \Nb$ be the $X'$-computable 
function such that~$g(e) = \Phi^X_e(e)$ if~$\Phi^X_e(e) \downarrow$,
and~$g(e) = 0$ otherwise. Fix an enumeration of all sets $(D_{e,i} : e, i \in \Nb)$ such that
$D_{e,i}$ is of size~$2\tuple{e,i}+1$, where~$\tuple{e,i}$ is the standard pairing function.
We define our $\ts^1$-instance~$f : \Nb \to \Nb$ by an $X'$-computable 
sequence of finite approximations~$f_0 \subseteq f_1 \subseteq \dots$,
such that at each stage~$s$, $f_s$ is defined on a domain of size at most~$2s$.
Furthermore, to ensure that $f = \bigcup_s f_s$ is $X'$-computable, we require that $s \in dom(f_{s+1})$.

At stage~$0$, $f_0$ is the empty function.
At stage~$s+1$, if~$s = \tuple{e,i}$,
take an element~$x \in D_{g(e),i} \setminus dom(f_s)$, and set~$f_{s+1} = f_s \sqcup \{x \mapsto i\}$
if~$s \in dom(f_s) \cup \{x\}$ and~$f_{s+1} = f_s \sqcup \{x, s \mapsto i \}$ otherwise.
Such an~$x$ must exist as~$\card{D_{g(e),i}} = 2\tuple{e,i}+1 = 2s+1 > dom(f_s)$.
Then go to the next stage.

Let~$H$ be an infinite $f$-thin set, with witness color~$i$.
For each~$e \in \Nb$, let~$h(e)$ be such that~$D_{h(e),i} \subseteq H$.
We claim that~$h(e) \neq \Phi^X_e(e)$. Suppose that it is not the case.
In particular, $h(e) = g(e)$. Let~$s = \tuple{g(e),i}$.
At stage~$s+1$, $f_{s+1}(x) = i$ for some~$x \in D_{g(e),i} = D_{h(e),i} \subseteq H$,
contradicting the fact that~$H$ is $f$-thin.
Therefore~$h$ is a function DNC relative to~$X$.
\end{proof}

\begin{corollary}
For every instance~$\vec{f}$ of~$\seqs{\ts^1}$,
there is an instance~$g$ of~$\ts^1$ such that  
for every solution~$H$ to~$g$, $H \oplus \vec{f}$ computes a solution to~$\vec{f}$.
\end{corollary}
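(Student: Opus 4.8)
The plan is to combine Theorem~\ref{thm:ts1-combinatorial-dnc}, applied relative to the jump of~$\vec{f}$, with Lemma~\ref{lem:seqts1-computed-by-dnc}. Note at the outset that the corollary imposes no effectiveness restriction on the instance~$g$ it produces, so we are free to construct~$g$ above arbitrarily powerful oracles.

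Given a $\seqs{\ts^1}$-instance $\vec{f} = f_0, f_1, \dots$, I would apply Theorem~\ref{thm:ts1-combinatorial-dnc} with $X = \vec{f}'$, obtaining a (necessarily $\vec{f}''$-computable) $\ts^1$-instance $g : \Nb \to \Nb$ such that every infinite $g$-thin set computes a function DNC relative to~$\vec{f}'$. This single step carries all the combinatorics; the rest is bookkeeping.

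Then, for an arbitrary solution~$H$ to~$g$, i.e., an arbitrary infinite $g$-thin set, $H$ computes some function~$d$ which is DNC relative to~$\vec{f}'$. Consequently $(H \oplus \vec{f})'$ also computes~$d$, so the jump of the set $C := H \oplus \vec{f}$ is of DNC degree relative to the jump of~$\vec{f}$. Applying Lemma~\ref{lem:seqts1-computed-by-dnc} to this~$C$ yields that $C \oplus \vec{f} = H \oplus \vec{f}$ computes a solution to~$\vec{f}$, which is exactly what the corollary demands, since~$H$ was an arbitrary solution to~$g$.

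I do not expect a genuine obstacle here: the corollary is essentially the formal combination of the two preceding results, the one idea being that Theorem~\ref{thm:ts1-combinatorial-dnc} is already phrased in a relativized form, so feeding it the oracle~$\vec{f}'$ delivers thin sets whose DNC strength lands at precisely the level demanded by Lemma~\ref{lem:seqts1-computed-by-dnc}. The only point that needs a line of justification is that the lemma asks for the \emph{jump} of~$C$ (rather than~$C$ itself) to be of DNC degree relative to~$\vec{f}'$, and this is immediate, since a function DNC relative to~$\vec{f}'$ that is computable from~$H$ is \emph{a fortiori} computable from~$(H \oplus \vec{f})'$.
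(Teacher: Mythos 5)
Your proof is correct and follows essentially the same route as the paper: compose Theorem~\ref{thm:ts1-combinatorial-dnc} (applied at the jump of~$\vec{f}$) with Lemma~\ref{lem:seqts1-computed-by-dnc}. You are right to apply the theorem with $X = \vec{f}'$ rather than~$X = \vec{f}$ --- the paper's proof text says ``DNC relative to~$\vec{f}$,'' which reads as a slip, since the lemma requires the jump of~$C$ to be of DNC degree relative to~$\vec{f}'$, and your choice of relativization is what delivers exactly that hypothesis.
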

\begin{proof}
Fix a $\seqs{\ts^1}$-instance~$\vec{f}$.
By Theorem~\ref{thm:ts1-combinatorial-dnc}, 
there is a $\ts^1$-instance~$g$ such that every infinite $g$-thin set~$H$ computes
a function DNC relative to~$\vec{f}$.
By Lemma~\ref{lem:seqts1-computed-by-dnc}, $H \oplus \vec{f}$ computes
a solution to~$\vec{f}$.
\end{proof}

The increasing polarized Ramsey's theorem has been introduced by Dzhafarov
and Hirst~\cite{Dzhafarov2009polarized} to find new principles between stable Ramsey's theorem for pairs
and Ramsey's theorem for pairs. We prove that the relativized
Ramsey-type weak K\"onig's lemma and the increasing polarized Ramsey's theorem are equivalent over $\mathsf{RCA}_0$.

\begin{definition}[Relativized Ramsey-type weak K\"onig's lemma]
Given an infinite set of strings $S \subseteq 2^{<\mathbb{N}}$,
let~$T_S$ denote the downward closure of $S$, that is,
$T_S = \{ \tau \in 2^{<\mathbb{N}} : (\exists \sigma \in S)[\tau \preceq \sigma] \}$.
$2\mbox{-}\mathsf{RWKL}$ is the statement~``For every set of strings~$S$,
there is an infinite set which is homogeneous for~$T_S$''.
\end{definition}

Note that the statement $\rwkls{2}$ slightly differs from a relativized
variant of~$\rwkl$ where the tree would have a $\Delta^0_2$ presentation.
However, those two formulations are equivalent over $\rca+\bst$
(see Flood~\cite{Flood2012Reverse}),
and therefore over~$\rca$ since~$\bst$ is a consequence of both statements.

\begin{definition}[Increasing polarized Ramsey's theorem]
A set~\emph{increasing p-homogeneous} for $f : [\mathbb{N}]^n \to k$
is a sequence~$\langle H_1, \dots, H_n \rangle$ of infinite sets such that
for some color~$c < k$, $f(x_1, \dots, x_n) = c$ for every increasing
tuple $\langle x_1, \dots, x_n \rangle \in H_1 \times \dots \times H_n$.
$\mathsf{IPT}^n_k$ is the statement~``Every coloring~$f : [\mathbb{N}]^n \to k$
has an infinite increasing p-homogeneous set''.
\end{definition}

\begin{theorem}
$\mathsf{RCA}_0 \vdash \mathsf{IPT}^2_2 \leftrightarrow 2\mbox{-}\mathsf{RWKL}$
\end{theorem}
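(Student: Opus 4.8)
The plan is to prove the two implications separately over $\rca$, exploiting the fact that both $\ipt^2_2$ and $\rwkls{2}$ are ``$\Delta^0_2$-combinatorial'' consequences of $\rt^1_2$, so the equivalence should boil down to matching up colorings over singletons (indexed by a parameter $s$) with sets of strings, under the dictionary ``a string of length $n$ in $S$ records a partial guess about membership in a homogeneous set''. Both directions will be witnessed by a uniform, effective translation of instances, together with a uniform way to recover a solution of one from a solution of the other; the arithmetic needed to carry out each translation and recovery will be $\Delta^0_1$ (or at worst $\Delta^0_2$), hence available in $\rca$. I would first record the mild observation, already noted in the excerpt after the definition of $\rwkls{2}$, that $\bst$ is a consequence of each of the two statements, so I may freely work over $\rca+\bst$ when convenient and the ambient induction issues disappear.

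First I would prove $\ipt^2_2 \imp \rwkls{2}$. Given an infinite set of strings $S \subseteq 2^{<\Nb}$, I want to extract a single $2$-coloring of pairs whose increasing p-homogeneous sequences yield an infinite set homogeneous for $T_S$. The natural move is: for each pair $x < s$, look at whether ``most'' strings in $S$ of length $>s$ that are still alive at stage $s$ assign the same bit at position $x$; more precisely, define $f(x,s)$ by examining the longest string of $S$ enumerated by stage $s$ (or by a similar bounded search) and setting $f(x,s)$ to be its $x$-th bit (with a default when undefined). An infinite increasing p-homogeneous sequence $\langle H_1, H_2\rangle$ with color $c$ then gives, on $H_1$, a set of positions that are eventually assigned $c$ along the strings of $S$; by a compactness/pigeonhole argument inside $\rca$ one shows the subtree of $T_S$ consisting of strings homogeneous (with color $c$) for $H_1$ is infinite, so $H_1$ is homogeneous for $T_S$. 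The key formalization point is that ``the subtree is infinite'' is a $\Pi^0_2$ statement whose witnesses are supplied by the second component $H_2$ together with the p-homogeneity, so no extra induction beyond $\bst$ is needed.

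For the converse $\rwkls{2} \imp \ipt^2_2$, start from $f : [\Nb]^2 \to 2$. The idea is to build a set of strings $S$ whose homogeneous sets ``are'' candidate columns of a p-homogeneous sequence: put into $S$ those strings $\sigma$ of length $n$ such that the finite set $D_\sigma = \{ i < n : \sigma(i) = 1\}$ is, as far as can be checked using $f$ restricted to $[\,n\,]^2$, consistent with being contained in one block of an increasing p-homogeneous pair for some fixed color $c$ — i.e.\ $f$ is constant on the relevant increasing pairs drawn from $D_\sigma$ against the initial segment. Then an infinite set $H$ homogeneous for $T_S$ picks out (via the homogeneity color) an infinite set on which $f$ stabilizes in the appropriate one-sided way; running this construction twice, or once for the row-condition and once for the column-condition and intersecting reservoirs in the standard Mathias-style bookkeeping, produces $\langle H_1, H_2\rangle$ with a common color. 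I expect this direction to be the main obstacle: the delicate part is arranging $S$ so that a homogeneous set for $T_S$ genuinely yields the \emph{product}-homogeneity of $\ipt^2_2$ (a statement about $H_1 \times H_2$) rather than merely a thin-set or one-sided-stability conclusion, and making sure the whole reconstruction of $\langle H_1, H_2 \rangle$ from $H$ is $\rca$-provable. The cleanest route is probably to first pass through the known equivalence of $\rwkls{2}$ with the $\Delta^0_2$-presented Ramsey-type weak K\"onig's lemma (Flood, cited in the excerpt) and the known equivalence of $\ipt^2_2$ with that same principle stated in the literature, so that the theorem becomes a matter of chaining two previously established equivalences over $\rca$; I would present the direct argument above as the self-contained alternative and fall back on the chain of citations for the routine verifications.
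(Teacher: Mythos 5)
Your forward direction ($\ipt^2_2 \imp \rwkls{2}$) has the right idea and is essentially what the paper does, though stated more loosely. The paper simply normalizes $S$ to $\{\sigma_0,\sigma_1,\dots\}$ with $\card{\sigma_i}=i$ (one string per length), sets $f(x,y)=\sigma_y(x)$ for $x<y$, and observes directly that if $\langle H_1,H_2\rangle$ is increasing p-homogeneous with color $c$ then $\{\sigma_y : y \in H_2\}$ is an infinite set of strings in $T_S$ on which $H_1$ is homogeneous with color $c$. There is no need for the ``most strings still alive,'' default values, or $\bst$ bookkeeping you invoke; $H_2$ itself supplies the witnesses to infinitude of the relevant subtree, so the argument is a direct verification, not a compactness/pigeonhole argument.

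The converse is where your proposal has a genuine gap. Your encoding puts into $S$ those $\sigma$ of length $n$ whose $1$-positions $D_\sigma$ look, as far as $f\restr [n]^2$ can tell, like a fragment of one block of a p-homogeneous pair of some color. But this condition is vacuously satisfied by the all-zeros string of every length (since $D_\sigma=\emptyset$), so $T_S$ always contains $\{0^n : n\in\Nb\}$, and hence \emph{every} infinite set $H$ is homogeneous for $T_S$ with color $0$. The homogeneous set you obtain from $\rwkls{2}$ therefore carries no information about $f$, and the subsequent reconstruction of $\langle H_1,H_2\rangle$ cannot get off the ground. The paper's encoding is the transpose of the forward one: $\sigma_y(x)=f(x,y)$ for $x<y$, $S=\{\sigma_y : y\in\Nb\}$. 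Given $H$ homogeneous for $T_S$ with color $c$, one builds $H_1\subseteq H$ and $H_2$ in alternating stages: add $x\in H$ above the current max to $H_1$, then use homogeneity of $H$ to find $\tau\in T_S$ with $\card{\tau}>x$ and $H$ homogeneous for $\tau$ with color $c$; since $\tau\preceq\sigma_y$ for some $y$, one checks $f(z,y)=\sigma_y(z)=\tau(z)=c$ for all $z\in H_1$ and adds $y$ to $H_2$. Note also that your proposed fallback --- citing ``the known equivalence of $\ipt^2_2$ with that same principle stated in the literature'' --- is circular here: this theorem \emph{is} where that equivalence is established; the Flood result you can cite only identifies $\rwkls{2}$ with its $\Delta^0_2$-tree presentation, which does not by itself bridge to $\ipt^2_2$.
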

\begin{proof}
$\mathsf{IPT}^2_2 \rightarrow 2\mbox{-}\mathsf{RWKL}$:
Let~$S = \{\sigma_0, \sigma_1, \dots \}$ be an infinite set of strings
such that~$|\sigma_i| = i$ for each~$i$.
Define the coloring~$f : [\mathbb{N}]^2 \to 2$ for each~$x < y$ by $f(x, y) = \sigma_y(x)$.
By~$\mathsf{IPT}^2_2$, let~$\langle H_1, H_2 \rangle$ be an infinite set increasing p-homogeneous for~$f$ with some color~$c$.
We claim that~$H_1$ is homogeneous for~$T_S$ with color~$c$. We will
prove that the set $I = \{\sigma \in T_S : \text{$H_1$ is homogeneous for $\sigma$}\}$ is infinite.
For each~$y \in \mathbb{N}$, let~$\tau_y$ be the string of length~$y$ defined by
$\tau_y(x) = f(x,y)$ for each~$x < y$. By definition of~$f$, $\tau_y \in S$ for each~$y \in \mathbb{N}$.
By definition of~$\langle H_1, H_2 \rangle$, $\tau_y(x) = c$ for each~$x \in H_1$ and~$y \in H_2$. 
Therefore, $H_1$ is homogeneous for~$\tau_y$ with color~$c$ for each~$y \in H_2$.
As $\{\tau_y : y \in H_2 \} \subseteq I$, the set~$I$ is infinite and therefore~$H_1$
is homogeneous for~$T_S$ with color~$c$.

$2\mbox{-}\mathsf{RWKL} \rightarrow \mathsf{IPT}^2_2$:
Let~$f : [\mathbb{N}]^2 \to 2$ be a coloring.
For each~$y$, let~$\sigma_y$ be the string of length~$y$
such that~$\sigma_y(x) = f(x, y)$ for each~$x < y$,
and let~$S = \{ \sigma_i : i \in \mathbb{N} \}$.
By~$2\mbox{-}\mathsf{RWKL}$, let~$H$ be an infinite set homogeneous for~$T_S$ with some color~$c$.
Define~$\langle H_1, H_2 \rangle$ by stages as follows.
At stage~0, $H_{1,0} = H_{2,0} = \emptyset$.
Suppose that at stage~$s$, $|H_{1,s}| = |H_{2,s}| = s$, $H_{1,s} \subseteq H$
and $\langle H_{1,s}, H_{2,s} \rangle$ is a finite set increasing p-homogeneous for~$f$ with color~$c$.
Take some~$x \in H$ such that~$x > max(H_{1,s}, H_{2,s})$ and set~$H_{1,s+1} = H_{1,s} \cup \{x\}$.
By definition of~$H$, there exists a string $\tau \prec \sigma_y$ for some~$y > x$,
such that $|\tau| > x$ and~$H$ is homogeneous for $\tau$ with color~$c$. Set~$H_{2,s+1} = H_{2,s} \cup \{y\}$.
We now check that the finite set $\langle H_{1,s+1}, H_{2,s+1} \rangle$ is an increasing p-homogeneous for~$f$ with color~$c$.
By induction hypothesis, we need only to check that $f(z, y) = c$ for every~$z \in H_{1,s+1}$. 
By definition of homogeneity
and as~$H_{1,s+1} \subset H$, $\sigma_y(z) = c$ for every~$y \in H_{1,s+1}$. By definition of~$\sigma_y$,
$f(z, y) = c$ for every~$z \in H_{1,s+1}$. 
This finishes the proof.
\end{proof}

\vspace{0.5cm}

\noindent \textbf{Acknowledgements}. The author is thankful to his PhD advisor Laurent Bienvenu
for interesting comments and discussions.
The author is funded by the John Templeton Foundation (`Structure and Randomness in the Theory of Computation' project). The opinions expressed in this publication are those of the author(s) and do not necessarily reflect the views of the John Templeton Foundation.

\vspace{0.5cm}

\bibliographystyle{plain}
\bibliography{../bibliography}

\end{document}